\newtheorem{theorem}{Theorem}[section]
\newtheorem{lemma}[theorem]{Lemma}
\newtheorem{proposition}[theorem]{Proposition}
\theoremstyle{definition}
\newtheorem{definition}[theorem]{Definition}
\newtheorem{remark}[theorem]{Remark}
\newtheorem{note}[theorem]{Note}
\newcommand{\ot}{\otimes}
\newcommand{\C}{\mathcal{C}}
\newcommand{\D}{\mathcal{D}}
\newcommand{\M}{\mathcal{M}}
\newcommand{\X}{\mathcal{X}}
\newcommand{\G}{\mathcal{G}}
\newcommand{\K}{\mathbb{K}}
\newcommand{\lexp}[2]{{\vphantom{#2}}^{#1}{#2}}
\newcommand{\lp}{\left(}
\newcommand{\rp}{\right)}
\newcommand{\lb}{\left\{}
\newcommand{\rb}{\right\}}
\newcommand{\rt}{\triangleright}
\def\1{\hbox{\rm\rlap {1}\hskip .03in{\rm I}}}
\DeclareMathOperator{\Ker}{Ker}
\DeclareMathOperator{\Aut}{Aut}
\DeclareMathOperator{\Mod}{Mod}
\DeclareMathOperator{\id}{id}
\DeclareMathOperator{\FPdim}{FPdim}
\DeclareMathOperator{\Stab}{Stab}
\DeclareMathOperator{\degree}{degree}
\DeclareMathOperator{\Irr}{Irr}
\renewcommand{\Vec}{\operatorname{Vec}}
\renewcommand{\H}{\operatorname{H}}
\renewcommand{\dim}{\operatorname{dim \,}}
\begin{document}

\title{Crossed pointed categories and their equivariantizations}
%\hfill \today
\date{June 01, 2009} 

\author{Deepak Naidu}
\address{Department of Mathematics, Texas A\&M University,
College Station, TX 77843, USA.}
\email{dnaidu@math.tamu.edu}

\begin{abstract}
We propose the notion of {\em quasi-abelian third cohomology} of
crossed modules, generalizing Eilenberg and MacLane's abelian cohomology and 
Ospel's quasi-abelian cohomology, and classify crossed pointed categories in terms of it.
We apply the process of equivariantization to the latter to obtain
braided fusion categories which may be viewed as generalizations of
the categories of modules over twisted Drinfeld doubles of finite 
groups. As a consequence, we obtain a description of {\em all} braided group-theoretical
categories. A criterion for these categories to be modular is given.
We also describe the quasi-triangular quasi-Hopf algebras underlying
these categories.
\end{abstract}

\maketitle

%%%%%%%%%%%%%%%%%%%%%%%%%%%%%%%%%%%%%%%%%%%%%%%%%%%%%%%%%%%%%%%%%%%%%%%%%%%
\begin{section}
{Introduction}

The notion of a crossed category (short for braided group-crossed category), 
introduced by Turaev \cite{Tu1,Tu2},
has attracted much attention recently \cite{DGNO,Ki1,Ki2,M1,M2}.
Roughly, a crossed category consists of a group $G$, a $G$-graded tensor
category $\C$, an action $g \mapsto T_g$ of $G$ on $\C$ by tensor
autoequivalences, and $G$-braidings
$c(X,Y): X \ot Y \xrightarrow{\sim} T_g(Y) \ot X,\, X,Y \in \C$, 
satisfying certain compatibility conditions.
Crossed categories are know to arise in various contexts; for instance,
in \cite{M1}, M\"uger showed that Galois extensions of braided
tensor categories have a natural structure of crossed categories. M\"uger also
established a connection between $1$-dimensional quantum field
theories and crossed categories \cite{M2}. Furthermore, crossed categories
have been shown, by Kirillov Jr., also to arise in the theory of vertex 
operator algebras \cite{Ki2}.

Recall that a fusion category is said to be {\em pointed} if all
its simple objects are invertible. 
One of our goals in the present note is to classify all 
crossed pointed categories. 
It is known \cite{JS} that braided pointed categories are classified
by Eilenberg and MacLane's abelian cohomology $\H_{ab}^3(A,\K^\times)$,
where $A$ is a finite abelian group.
On the other hand, in \cite{Tu1,Tu2}, a description of certain crossed pointed
categories in which the group action is strict is given in terms of 
Ospel's quasi-abelian cohomology $\H_{qa}^3(G,\K^\times)$, where $G$ is
a (not necessarily abelian) finite group.  
As remarked in \cite[Subsection 4.9]{M2}, to obtain
a complete classification of crossed pointed categories one must
allow for non-strict group actions. To this end, we generalize
Ospel's quasi-abelian cohomology to define the notion of
quasi-abelian third cohomology $\H_{qa}^3(\X,\K^\times)$ of a crossed module
$\X$ (see Definition~\ref{quasi-abelian 3-cocycle}). 
To any given $\xi \in Z_{qa}^3(\X,\K^\times)$ we associate a crossed
pointed category $\C(\xi)$ and show that all 
crossed pointed categories are of this form.

Another notion that has been studied extensively recently is that of
a modular category.
Examples of modular categories arise from several diverse areas such as 
quantum group theory, $3$-dimensional topology, vertex operator algebras, 
rational conformal field theory, etc.
Let $G$ be a finite group.
Perhaps the most accessible construction of a modular category 
is that of the category of
modules over the Drinfeld double $D(G)$ of $G$. Let $\omega$ be 
a $3$-cocycle on $G$.
In \cite{DPR1, DPR2} Dijkgraaf, 
Pasquier, and Roche introduced a quasi-triangular quasi-Hopf algebra 
$D^\omega(G)$, generalizing the Drinfeld double $D(G)$.
It is well known that the 
category $D^\omega(G)$-$\Mod$ of modules over $D^\omega(G)$ is a modular category.
Modular categories resembling $D^\omega(G)$-$\Mod$ arises naturally from crossed pointed categories.
Specifically, an important feature of a general crossed fusion category is that the application
of the equivariantization process (which is analogous to taking the invariants under a groups action) 
yields a braided fusion category.
We apply the equivariantization process to the aforementioned 
crossed pointed category $\C(\xi)$ and study the resulting braided fusion category,
which resembles the category $D^\omega(G)$-$\Mod$.
As a consequence, we obtain a description of {\em all} braided group-theoretical
categories.
We show that $\C(\xi)$ is modular if and only if 
$\xi$ is nondegenerate in the sense of 
Definition~\ref{nondegenerate quasi-abelian 3-cocycle} and a certain
homomorphism is surjective (see Proposition~\ref{when nd}).

By a general result, the equivariantization of the category $\C(\xi)$ is equivalent,
as a braided fusion category, to the category of modules over
some finite-dimensional quasi-triangular quasi-Hopf algebra $H$.
In the sequel we describe such an $H$. Namely,
given $\xi \in Z_{qa}^3(\X,\K^\times)$, we construct a 
finite-dimensional quasi-triangular quasi-Hopf algebra $H(\xi)$,
generalizing $D^\omega(G)$, and show that 
$\C(\xi) \cong H(\xi)$-$\Mod$, as braided fusion categories.

The content of this note is as follows. In Section 2, we recall 
some essential definitions and results concerning nondegenerate fusion
categories, equivariantization, and crossed categories.
In Section 3, we propose the notion of quasi-abelian third cohomology
of crossed modules. Section 4 consists of a construction of 
crossed pointed categories from quasi-abelian $3$-cocycles
and a classification of the former. In Section 5, we apply the process of equivariantization
to the categories obtained in Section 4 and study the resulting braided
fusion categories. In Section 6, we construct finite-dimensional quasi-triangular
quasi-Hopf algebras from quasi-abelian $3$-cocycles, which are shown to underlie
the braided fusion categories obtained in the Section 5. 

\textbf{Acknowledgments:} We thank D. Nikshych and S. Witherspoon
for useful discussions.
\end{section}
%%%%%%%%%%%%%%%%%%%%%%%%%%%%%%%%%%%%%%%%%%%%%%%%%%%%%%%%%%%%%%%%%%%%%%%%%%%

%%%%%%%%%%%%%%%%%%%%%%%%%%%%%%%%%%%%%%%%%%%%%%%%%%%%%%%%%%%%%%%%%%%%%%%%%%%
\begin{section}
{Preliminaries}

In this note, we will freely use the language and basic theory of fusion categories
and modular categories \cite{BK, Os, ENO}. 
In what follows we recall some essential definitions and results.

%%%%%%%%%%%%%%%%%%%%%%%%%%%%%%%%%%%%%%%%%%%
\begin{subsection}
{Convention}
Let $\K$ be an algebraically closed field of characteristic $0$. 
The multiplicative group of non-zero elements of $\K$ will be denoted by $\K^\times$. 
Unless otherwise stated, all cocycles appearing in this 
work will have coefficients in the trivial module $\K^\times$.
All functors will be assumed to be additive and $\K$-linear on the morphism spaces.
The unit object of a tensor category will be denoted by $\1$. 
The identity element of a group will be denoted by $e$.
\end{subsection}
%%%%%%%%%%%%%%%%%%%%%%%%%%%%%%%%%%%%%%%%%%%

%%%%%%%%%%%%%%%%%%%%%%%%%%%%%%%%%%%%%%%%%%%
\begin{subsection}{Morita equivalence}

Following \cite{M4}, we say that two fusion categories $\C$ and $\D$ are
Morita equivalent if $\D$ is equivalent to the dual fusion category
$\C^*_\M$, for some indecomposable right $\C$-module category $\M$
(see also \cite{ENO, O}). 
The above is known to be an equivalence relation on the class of 
fusion categories. A fusion category is said to be {\em pointed} if
all it's simple objects are invertible.
A fusion category is {\em group-theoretical} 
if it is Morita equivalent to a pointed category. 
\end{subsection}
%%%%%%%%%%%%%%%%%%%%%%%%%%%%%%%%%%%%%%%%%%%

%%%%%%%%%%%%%%%%%%%%%%%%%%%%%%%%%%%%%%%%%%%
\begin{subsection}{Nondegenerate fusion categories} 

Let $\C$ be a braided fusion category with braiding $c$.
Two objects $X$ and $Y$ of $\C$ are said to {\em centralize}
each other if $c(Y,X) \circ c(X,Y)=\id_{X\ot Y}$ \cite{M3}.

For any fusion subcategory $\D \subseteq \C$ its {\em centralizer}
$\D'$ is the full fusion subcategory of $\C$ consisting of all objects $X \in \C$
which centralize every object in $\D$. The category $\C$ is said to be
{\em non-degenerate} if $\C'=\Vec$ (the fusion category generated
by the unit object). If $\C$ is a pre-modular category,
i.e., has a twist, then it is non-degenerate if and only if it is
modular \cite{BB, M3, DGNO}.

The following proposition will be used later.

\begin{proposition}
\label{twists and self-dual invertibles}
Let $\C$ be a nondegenerate fusion category.
Suppose $\C$ admits a twist. Then the set of 
twists on $\C$ is in bijection with
the set of invertible self-dual objects of $\C$.
\end{proposition}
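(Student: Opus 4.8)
The plan is to identify the set $T$ of twists with the $2$-torsion subgroup of the abelian group $\Aut^{\otimes}(\id_\C)$ of tensor natural automorphisms of $\id_\C$, and then, using nondegeneracy, to identify $\Aut^{\otimes}(\id_\C)$ with the group $\operatorname{Inv}(\C)$ of isomorphism classes of invertible objects in such a way that the $2$-torsion corresponds precisely to the invertible self-dual objects (an invertible $g$ is self-dual iff $g^{*}\cong g^{-1}\cong g$, i.e.\ iff $g\otimes g\cong\1$, so the set of invertible self-dual objects is the subgroup $\operatorname{Inv}(\C)[2]$). Recall that a twist is a natural isomorphism $\theta\colon\id_\C\to\id_\C$ with $\theta_{X\otimes Y}=(\theta_X\otimes\theta_Y)\circ c_{Y,X}\circ c_{X,Y}$ and $\theta_{X^{*}}=(\theta_X)^{*}$ (these force $\theta_{\1}=\id_{\1}$). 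By hypothesis $T\neq\emptyset$; fix $\theta_0\in T$. For $g\in\operatorname{Inv}(\C)$ let $\beta_g\in\Aut^{\otimes}(\id_\C)$ be the tensor natural automorphism obtained by transporting the double braiding $X\otimes g\xrightarrow{c_{X,g}}g\otimes X\xrightarrow{c_{g,X}}X\otimes g$ along the autoequivalence $-\otimes g$; the hexagon axioms show that $\beta_g$ is monoidal and that $g\mapsto\beta_g$ is a group homomorphism $\beta\colon\operatorname{Inv}(\C)\to\Aut^{\otimes}(\id_\C)$.

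\emph{Step 1: $T$ is a torsor over $E:=\Aut^{\otimes}(\id_\C)[2]$.} Given $\theta,\theta'\in T$, set $\eta:=\theta'\circ\theta^{-1}$. Substituting the tensor axiom for $\theta$ and $\theta'$ and cancelling the double braidings yields $\eta_{X\otimes Y}=\eta_X\otimes\eta_Y$, so $\eta\in\Aut^{\otimes}(\id_\C)$. Writing $\eta_X=\lambda(X)\,\id_X$ on a simple $X$, naturality of $\eta$ along $\mathrm{coev}_X\colon\1\to X\otimes X^{*}$ forces $\lambda(X)\lambda(X^{*})=1$, while the duality axiom for $\theta$ and $\theta'$ gives $\lambda(X^{*})=\lambda(X)$; hence $\lambda(X)^{2}=1$, i.e.\ $\eta\in E$. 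Conversely, for any $\eta\in E$ one checks directly that $\eta\circ\theta_0$ again satisfies the two twist axioms (the duality axiom using $\lambda(X^{*})=\lambda(X)^{-1}=\lambda(X)$). Thus $\theta\mapsto\theta\circ\theta_0^{-1}$ is a bijection $T\xrightarrow{\sim}E$, and $T$ is a torsor under the abelian group $E$.

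\emph{Step 2: $E$ is in bijection with $\operatorname{Inv}(\C)[2]$, the set of invertible self-dual objects.} Since $\Aut^{\otimes}(\id_\C)$ is abelian it suffices to show $\beta$ restricts to an isomorphism $\operatorname{Inv}(\C)[2]\xrightarrow{\sim}E$. First, $\beta$ is injective: $\beta_g=\id$ means $g$ centralizes every object of $\C$, i.e.\ $g\in\C'=\Vec$ by nondegeneracy, whence $g\cong\1$. Second, $\beta$ is surjective onto $\Aut^{\otimes}(\id_\C)$: this is the essential use of nondegeneracy — every tensor natural automorphism of $\id_\C$ arises as a double braiding by an invertible object (equivalently, under the canonical isomorphism $\Aut^{\otimes}(\id_\C)\cong\widehat{U(\C)}$ the universal grading group $U(\C)$ is canonically dual to $\operatorname{Inv}(\C)$; see \cite{DGNO}). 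Given $\eta\in E$, choose $g\in\operatorname{Inv}(\C)$ with $\beta_g=\eta$; then $\beta_{g\otimes g}=\beta_g^{2}=\eta^{2}=\id$, so $g\otimes g\cong\1$ by injectivity, i.e.\ $g\in\operatorname{Inv}(\C)[2]$. Hence $\beta$ gives the desired isomorphism.

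Combining the two steps, $g\mapsto\beta_g\circ\theta_0$ is a bijection from the set of invertible self-dual objects onto $T$; more intrinsically, $T$ is a torsor under the group of invertible self-dual objects, an object $g$ acting on a twist $\theta$ by $\theta\mapsto\beta_g\circ\theta$. The main obstacle is the surjectivity of $\beta$ in Step 2: this is exactly where nondegeneracy is used in a nontrivial way (the appeal to $\C'=\Vec$ for injectivity is comparatively soft), and it is handled either by quoting the known description of $\Aut^{\otimes}(\id_\C)$ for nondegenerate $\C$ or by arguing directly from nondegeneracy of the pairing between invertible objects and simple objects afforded by (the trace of) the double braiding.
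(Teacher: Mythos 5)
Your proof is correct and follows essentially the same route as the paper's: both realize the set of twists as a torsor over a distinguished subgroup of $\Aut_\ot(\id_\C)$ via $\varphi\mapsto\theta_0\circ\varphi$, and both then invoke the isomorphism $G(\C)\cong\Aut_\ot(\id_\C)$ valid for nondegenerate $\C$ (quoted from \cite{GN} in the paper, from \cite{DGNO} in your write-up) to match that subgroup with the invertible self-dual objects. The only cosmetic difference is that you characterize the subgroup as the $2$-torsion $\Aut_\ot(\id_\C)[2]$ while the paper uses $\Aut_\ot^*(\id_\C)=\{\varphi\mid\varphi_{X^*}=(\varphi_X)^*\}$; these agree because, as your Step 1 observes, naturality along coevaluations forces $\lambda(X^*)=\lambda(X)^{-1}$ for any tensor automorphism of $\id_\C$.
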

\begin{proof}
Let $\Aut_\ot(\id_\C)$ denote the group of tensor automorphisms
of the identity tensor functor $\id_\C$. 
Define $\Aut_\ot^*(\id_\C) := \{ \varphi \in \Aut_\ot(\id_\C) \mid
\varphi_{X^*} = (\varphi_X)^*,\; \forall X \in \C\}$. 
%It is easy to check that
%the set of twists on $\C$ is a (nonempty) torsor over 
%$\Aut_\ot^*(\id_\C)$. 
Let $\theta$ be a fixed twist on $\C$. The map $\varphi \mapsto \theta_{\varphi},
(\theta_{\varphi})_X := \theta_X \circ \varphi_X$ for all $X \in \C$, is a bijection
from $\Aut_\ot^*(\id_\C)$ to the set of all twists on $\C$.

Let $X_1,X_2,\cdots$ denote the simple
objects of $\C$ and let $G(\C)$ denote the group of invertible objects
of $\C$. Also, let $S$ denote the $S$-matrix of $\C$ with respect to
$\theta$. It was shown in \cite{GN} that the map
$$
G(\C) \to \Aut_\ot(\id_\C): X_j \mapsto \varphi_j, \qquad 
(\varphi_j)_{X_i} := \frac{S_{ij}}{d(X_i) d(X_j)} \id_{X_i}
$$
is an isomorphism. It is easy to check that this map restricts to
a bijection between the set of invertible self-dual object of $\C$
and the set $\Aut_\ot^*(\id_\C)$.
\end{proof}

\end{subsection}
%%%%%%%%%%%%%%%%%%%%%%%%%%%%%%%%%%%%%%%%%%%

%%%%%%%%%%%%%%%%%%%%%%%%%%%%%%%%%%%%%%%%%%%
\begin{subsection}
{Equivariantization}
Recall that a tensor functor between two tensor categories
$\C$ and $\D$ is a triple $(F, \varphi, \varphi_0)$ where
$F:\C \to \D$ is a functor, $\varphi$ is a natural isomorphism
$F \circ \ot_{\C} \xrightarrow{\sim} 
\ot_{\D} \circ (F \times F)$, and $\varphi_0$
is an isomorphism $F(\1_{\C}) \xrightarrow{\sim} \1_{\D}$ satisfying
certain compatibility conditions (see \cite{K}). We will call
$\varphi$ the {\em tensor structure} on $F$ and $\varphi_0$ the {\em unit-preserving structure}
on $F$. For a group $G$, we will denote by $\underline{G}$ the tensor
category whose objects are elements of $G$, morphisms are the identities,
and whose tensor product is given by the group operation in $G$.

Let $\C$ be a fusion category with an action of a finite 
group $G$ given by a tensor functor $T:\underline{G}
\to \Aut_\ot(\C); g \mapsto T_g$. 
Let $\gamma$ be the tensor structure on the functor $T$.
In this situation one can define the notion of a $G$-equivariant object in $\C$.
Namely, a {\em $G$-equivariant object\,} in $\C$ is a pair $(X, \{u_g\}_{g\in G})$
where $X$ is an object of $\C$ and   
\begin{equation}
\label{equivariant structure}
u_g: T_g(X)\xrightarrow{\sim} X, \qquad g\in G,
\end{equation}
is a family of  isomorphisms called {\em equivariant structure on $X$} such that 
\begin{equation}
\label{equivariant structure condition}
u_{gh}= u_g \circ T_g(u_h) \circ \gamma_{g,h}(X),
\end{equation}
for all $g,h \in G$. 

One defines morphisms between equivariant objects to be morphisms in $\C$ commuting 
with the equivariant structures. The {\em equivariantization of $\C$}, denoted $\C^G$,
is the category of $G$-equivariant objects of $\C$ \cite{Ki1, AG,G,Ta}. The equivariantization category
$\C^G$ is a fusion category with tensor product given by the following.
Let $(X, \{u_g\}_{g\in G}), (X', \{u'_g\}_{g\in G}) \in \C^G$. Then
$$
(X, \{u_g\}_{g\in G}) \ot (X', \{u'_g\}_{g\in G}) := (X \ot X', \{\tilde{u}_g\}_{g\in G}),
$$
where
\begin{equation}
\label{tensor product in equivariantization}
\tilde{u}_g := (u_g \ot u'_g) \circ \mu_g(X,X')
\end{equation}
for all $g \in G$. Here $\mu_g$ is the tensor structure on the functor $T_g, g \in G$.

\begin{remark}
We have $\FPdim(\C^G) = |G| \FPdim(\C)$.
\end{remark}

\end{subsection}
%%%%%%%%%%%%%%%%%%%%%%%%%%%%%%%%%%%%%%%%%%%

%%%%%%%%%%%%%%%%%%%%%%%%%%%%%%%%%%%%%%%%%%%
\begin{subsection}
{Crossed categories}

Recall that a  {\em grading} of a fusion category $\C$ 
by a finite group $G$ is a decomposition
$$
\C =\bigoplus_{g\in G}\, \C_g
$$
of $\C$ into a direct sum of full abelian subcategories such that
$\otimes$ maps $\C_g\times \C_h$ to $\C_{gh}$ and $*$ maps $\C_g$ to $\C_{g^{-1}}$, for all $g,h\in G$. 
Note that $\C_e$, called the {\em trivial component}, is a fusion subcategory of $\C$.  
A grading is said to {\em faithful} if  $\C_g\neq 0$ for all $g\in G$.

Below we recall the notion of a {\em crossed category}
(short for {\em braided group-crossed category}),
introduced by Turaev \cite{Tu1, Tu2}, in a more general form 
(see also \cite{DGNO,M1,M2}).

\begin{definition}
\label{crossed category}
A {\em crossed fusion category} is an eight-tuple
$(\C, G, T, \gamma, \iota, \mu, \nu, c)$, where
\begin{itemize}
\item $G$ is a finite group,
\item $\C$ is a fusion category with a (not necessarily faithful)
$G$-grading $\displaystyle \C~=~\bigoplus_{g \in G}\C_g$,
\item $T: \underline{G} \to \Aut_{\ot}(\C):g \mapsto T_g$ is a tensor functor,
satisfying $T_g(\C_h) \subset \C_{ghg^{-1}}$,
with tensor structure $\gamma$ and unit-preserving structure
$\iota$,
\item $\mu$ is a family $\{\mu_g\}_{g \in G}$ where $\mu_g$
is a tensor structure on $T_g$,
\item $\nu$ is a family $\{\nu_g\}_{g \in G}$ where $\nu_g$
is a unit-preserving structure on $T_g$,
\item $c(X,Y) : X \ot Y \xrightarrow{\sim} T_g(Y) \ot X,
\,\, X \in \C_g,Y \in \C$, is a family of natural isomorphisms, called {\em $G$-braiding},
\end{itemize}
satisfying the following compatibility conditions:
\begin{equation}
\label{T preserves c}
\tag{i}
\begin{split}
(\gamma_{g,h}(Y) \ot \id_{T_g(X)}) \circ &
(\gamma^{-1}_{ghg^{-1},g}(Y) \ot  \id_{T_g(X)}) \circ
c(T_g(X),T_g(Y)) \circ \mu_g(X,Y) \\
&= \mu_g(T_{h}(Y),X) \circ T_g(c(X,Y)), 
\end{split}
\end{equation}
for all $g,h \in G$ and objects $X \in \C_h ,Y \in \C$.
\begin{equation}
\label{c1}
\tag{ii}
\begin{split}
\alpha^{-1}_{T_g(T_h(Z)),X,Y} \circ &
(\gamma_{g,h}(Z) \ot \id_{X \ot Y}) \circ
c(X \ot Y,Z) \circ \alpha^{-1}_{X,Y,Z} \\
&= (c(X,T_h(Z)) \ot \id_Y) \circ
\alpha^{-1}_{X,T_h(Z),Y} \circ
(\id_X \ot c(Y,Z)), 
\end{split}
\end{equation}
for all $g,h \in G$ and objects $X \in \C_g,Y \in \C_h,Z \in \C$.
\begin{equation}
\label{c2}
\tag{iii}
\begin{split}
\alpha_{T_g(Y),T_g(Z),X} \circ &
(\mu_g(Y,Z) \ot \id_X) \circ
c(X,Y \ot Z) \circ \alpha_{X,Y,Z} \\
& =(\id_{T_g(Y)} \ot c(X,Z)) \circ
\alpha_{T_g(Y),X,Z} \circ
(c(X,Y) \ot \id_Z), 
\end{split}
\end{equation}
for all $g \in G$ and objects $X \in \C_g,Y,Z \in \C$.

(Here $\alpha$ denotes the associativity constraint of $\C$.)
\end{definition}

\begin{remark}
The trivial component of a crossed fusion category is a braided fusion category.
\end{remark}

Now let $\C:=(\C, G, T, \gamma, \iota, \mu, \nu, c)$
be a crossed fusion category.
It is explained in \cite{Ki1} and \cite{M1} that 
the equivariantization category $\C^G$ admits a braiding, 
i.e, $C^G$ is a braided fusion category.
The braiding $\tilde{c}$ on $\C^G$ is defined as follows.
Let $(X,\, \{u_g\}_{g\in G})$ and $(X',\, \{u'_g\}_{g\in G})$ be 
objects of  $\C^G$.
Let $X =\oplus_{g\in G}\,X_g$ be a decomposition of $X$ with respect
to the $G$-grading of $\C$.  Then $\tilde{c}_{X,X'}$ is given by the composition
\begin{equation}
\label{braiding in equivariantization}
X\ot X' =\bigoplus_{g\in G}\, X_g\ot X' \xrightarrow{\oplus\,c_{X_g,X'}}
\bigoplus_{g\in G}\, T_g(X') \ot X_g \xrightarrow{\oplus\, u'_g \ot \id_{X_g}} 
\bigoplus_{g\in G}\, X' \ot X_g = X'\ot X.
\end{equation}

\begin{remark}
\label{when C^G is nd}
It is shown in \cite{DGNO} that the equivariantization category $\C^G$ 
is nondegenerate if and only if the $G$-grading is
faithful and the trivial component $\C_e$ is nondegenerate.
\end{remark}

\begin{definition}
\label{crossed functor}
Let $\C=(\C, G, T, \gamma, \iota, \mu, \nu, c)$ and
$\C'=(\C', G', T', \gamma', \iota', \mu', \nu', c')$
be crossed fusion categories. A {\em crossed tensor
functor} from $\C$ to $\C'$ is a $5$-tuple $(f,F,\eta,\eta_0,\beta)$
where
\begin{itemize}
\item $f:G \to G'$ is a group homomorphism,
\item $F:\C \to \C'$ is a tensor functor with tensor structure
$\eta$ and unit-preserving structure $\eta_0$,
\item $\beta$ is a family $\{\beta_g\}_{g \in G}$ where 
$\beta_g:F \circ T_g \xrightarrow{\sim} T'_{f(g)} \circ F$ is
an isomorphism of tensor functors
\end{itemize}
satisfying the following compatibility conditions:
\begin{equation}
\label{F preserves grading}
\tag{i}
F(\C_g) \subseteq \C'_{f(g)}, 
\end{equation}
for all $g \in G$.
\begin{equation}
\label{F preserves c}
\tag{ii}
(\beta_g(Y) \ot \id_{F(X)}) \circ
\eta(T_g(Y),X) \circ F(c(X,Y))
= c'(F(X),F(Y)) \circ \eta(X,Y), 
\end{equation}
for all $g \in G$ and objects $X \in \C_g,Y \in \C$.
\begin{equation}
\label{beta and gamma are compatible}
\tag{iii}
T'_{f(g)}(\beta_h(X)) \circ \beta_g(T_h(X)) \circ F(\gamma_{g,h}(X)) 
= \gamma'_{f(g),f(h)}(F(X)) \circ \beta_{gh}(X), 
\end{equation}
for all $g,h \in G$ and objects $X \in \C$.

We say that $(f,F,\eta,\eta_0,\beta)$ is an {\em equivalence} if $f$ is an
isomorphism and $F$ is an equivalence.
\end{definition}

\end{subsection}
%%%%%%%%%%%%%%%%%%%%%%%%%%%%%%%%%%%%%%%%%%%
\begin{subsection}
{Pointed Categories}
\label{pointed categories}
Recall that a fusion category is said to be {\em pointed} if
all its simple object are invertible.

Let $X$ be a finite group and $\omega$ be a $3$-cocycle on $X$. We associate
to the pair $(X,\omega)$ a pointed category $\Vec_X^\omega$ as follows.
The objects of $\Vec_X^\omega$ are $X$-graded finite dimensional vector
spaces over $\K$ and morphisms are linear transformations that respect the
grading. The unit object of $\Vec_X^\omega$ is ground field $\K$ supported
on $\{e\}$. The tensor product $V \ot W$ of homogeneous objects $V,W \in \Vec_X^\omega$
of degrees $x,y \in X$, respectively, is defined to be the homogeneous object
$V \ot_\K W$ of degree $xy$.

The associativity constraint $\alpha$ is defined by
$$
\alpha_{U,V,W}:
(U \ot V) \ot W \xrightarrow{\sim} U \ot (V \ot W): 
(u \otimes v) \otimes w 
\mapsto \omega(x,y,z) u \otimes (v \otimes w),
$$
where $U,V,W \in \Vec_G^\omega$ and $u \in U, v \in V, w \in W$
are homogeneous elements of degrees $x,y,z \in X$, respectively.

The left and right unit constraints $\lambda$ and $\rho$,
respectively, are defined by
$$
\lambda_{V} := \K \ot V \xrightarrow{\sim} V:
1 \ot v \mapsto \omega(e,e,x)^{-1} v,
$$
and
$$
\rho_{V} := V \ot \K \xrightarrow{\sim} V:
v \ot 1 \mapsto \omega(x,e,e) v.
$$ 
where $V \in \Vec_G^\omega$ and $v \in V$
is a homogeneous element of degree $x \in X$.

Every pointed category is equivalent to some $\Vec_X^\omega$.
\end{subsection}
%%%%%%%%%%%%%%%%%%%%%%%%%%%%%%%%%%%%%%%%%%%
\begin{subsection}
{Crossed Modules}
Recall that a {\em (finite) crossed module} is a triple $(G,\, X,\, \partial)$,
where $G$ and $X$ are (finite) groups with $G$ acting on $X$ as automorphisms,
denoted $(g, x) \mapsto \lexp{g}{x}$, and $\partial : X \to G$ 
is a homomorphism satisfying
$$ 
\lexp{\partial(x)}{x'} = xx'x^{-1}, \qquad x,x' \in X,
$$
and
$$ 
\partial(\lexp{g}{x}) = g \partial(x) g^{-1}, \qquad g \in G, x \in X.
$$

Note that $\Ker \partial$ is a central subgroup of $X$.

A homomorphism of crossed modules $(G,\, X,\, \partial) \to (G',\, X',\, \partial')$
is a pair of group homomorphisms $(f: G\to G', \, F: X \to X')$ 
such that $\partial'\circ F = f \circ \partial$ and 
$F(\lexp{g}{x}) = \lexp{f(g)}{F(x)}$, $g \in G$. We say that $(f,F)$ is
an {\em isomorphism} if both $f$ and $F$ are isomorphisms.

\end{subsection}
%%%%%%%%%%%%%%%%%%%%%%%%%%%%%%%%%%%%%%%%%%%

\end{section}
%%%%%%%%%%%%%%%%%%%%%%%%%%%%%%%%%%%%%%%%%%%%%%%%%%%%%%%%%%%%%%%%%%%%%%%%%%%

%%%%%%%%%%%%%%%%%%%%%%%%%%%%%%%%%%%%%%%%%%%%%%%%%%%%%%%%%%%%%%%%%%%%%%%%%%%
\begin{section}
{Quasi-abelian third cohomology of crossed modules}
\label{quasi-abelian cohomology}

Let $A$ be an abelian group. Eilenberg and Mac Lane \cite{EM1,EM2,ML}
argue that the cohomology groups $\H^n(A,\K^\times)$ are inappropriate 
since then do not take into account the abelianess of $A$, so should be 
replaced by groups $\H_{ab}^n(A,\K^\times)$. (For the cohomology theory for crossed modules, see \cite{W}.)
Below we recall the definition of $\H_{ab}^3(A,\K^\times)$.

An {\em abelian $3$-cocycle} on $A$ is a pair $(\omega, c)$, where
$\omega$ is a normalized $3$-cocycle on $A$, i.e.,
\begin{eqnarray*}
\omega(x,y,z) &=& 1, \text{ if } x,y, \text{ or } z \text{ is identity},\\
\omega(x,y,z) \omega(w,xy,z) \omega(w,x,y) &=& \omega(w,x,yz) \omega(wx,y,z),
\end{eqnarray*}
for all $w,x,y,z \in A$,
and $c$ is a $2$-cochain on $A$ (i.e., $c \in C^2(A, \K^\times)$) satisfying the following
equations:
\begin{eqnarray*}
c(xy,z) &=& \frac{\omega(x,y,z) \omega(z,x,y)}
{\omega(x,z,y)}
c(x,z) c(y,z), \\
c(x,yz) &=& \frac{\omega(y,x,z)}
{\omega(x,y,z) \omega(y,z,x)}
c(x,y) c(x,z)
\end{eqnarray*}
for all $x,y,z \in A$.

Abelian $3$-cocycles on $A$ form an abelian
group, denoted by $Z_{ab}^3(A,\K^\times)$, under pointwise
multiplication. The group of coboundaries is defined by
$$
B^3_{ab}(A,\K^\times) :=
\lb \lp d\eta, \, 
\, (x,y) \mapsto \frac{\eta(y,x)}{\eta(x,y)} \rp \, \vline \,
\text{ normalized } \eta \in C^2(G,\K^\times) \rb,
$$
which is a subgroup of $Z^3_{ab}(A,\K^\times)$.
The quotient $Z^3_{ab}(A,\K^\times)/B^3_{ab}(A,\K^\times)$ is
the {\em abelian third cohomology} of $A$ denoted $\H^3_{ab}(A,\K^\times)$.

\begin{remark}
The group $\H^3_{ab}(A,\K^\times)$ is isomorphic to the group of quadratic forms on $A$, see \cite{ML}.
\end{remark}

\begin{definition}
An abelian $3$-cocycle $(\omega,c)$ on $A$ is {\em nondegenerate} if the symmetric
bicharacter
$$
A \times A \to \K^\times: (x,y) \mapsto c(y,x) c(x,y)
$$
is nondegenerate.
\end{definition}

In \cite{O}, C. Ospel generalized the notion of abelian third cohomology in
the following way. Let $G$ be a (not necessarily abelian) group.
A  {\em quasi-abelian $3$-cocycle} on $G$
is a pair $(\omega,c)$, where $\omega$ is a $3$-cocycle on $G$ and $c$ is
a $2$-cochain on $G$ (i.e., $c \in C^2(G, \K^\times)$) satisfying the following
equations:
\begin{eqnarray*}
\omega(gxg^{-1},gyg^{-1},gzg^{-1}) &=& \omega(x,y,z),\\
c(gxg^{-1},gyg^{-1}) &=& c(x,y), \\
c(xy,z) &=& \frac{\omega(x,y,z) \omega(xyz(xy)^{-1},x,y)}
{\omega(x,yzy^{-1},y)}
c(x,yzy^{-1}) c(y,z), \\
c(x,yz) &=& \frac{\omega(xyx^{-1},x,z)}
{\omega(x,y,z) \omega(y,z,x)}
c(x,y) c(x,z),
\end{eqnarray*}
for all $g,x,y,z \in G$.

\begin{note}
The third equation above appeared
in a slightly different but equivalent form in \cite{O}.
\end{note}

Quasi-abelian $3$-cocycles on $G$ form an abelian
group, denoted by $Z_{qa}^3(G,\K^\times)$, under pointwise
multiplication. The group of coboundaries is defined by
$$
B^3_{qa}(G,\K^\times) :=
\lb \lp d(\eta), \, 
\, (x,y) \mapsto \frac{\eta(y,x)}{\eta(x,y)} \rp \, \vline \,
\text{ conjugation-invariant } \eta \in C^2(G,\K^\times) \rb,
$$
which is a subgroup of $Z^3_{qa}(G,\K^\times)$.
The quotient $Z^3_{qa}(G,\K^\times)/B^3_{qa}(G,\K^\times)$ is
the {\em quasi-abelian third cohomology} of $G$ denoted $\H^3_{qa}(G,\K^\times)$.
When $G$ is abelian, quasi-abelian cohomology reduces to abelian cohomology:
$\H^3_{ab}(G,\K^\times) = \H^3_{qa}(G,\K^\times)$.

We extend Ospel's quasi-abelian cohomology for groups to cover crossed modules, as follows.
We allow $G$ to act on an arbitrary group $X$ (not just $X=G$). The first condition $\omega^g=\omega$
in Ospel's definition is replaced by the condition `$\omega^g$ is cohomologous to $\omega$ via $\mu_g$'.
The second condition $c^g = c$ is extended similarly, as are the other conditions. This results in
the following definition, whose main motivation is the classification of crossed pointed categories
(see Section~4).

\begin{definition}
\label{quasi-abelian 3-cocycle}
A {\em quasi-abelian $3$-cocycle} on a crossed module
$\X = (G,X,\partial)$ is a quadruple $(\omega, \gamma, \mu, c)$
where
\begin{equation}
\label{condition on omega}
\tag{a}
\omega \in Z^3(X,\K^\times),
\end{equation}
\begin{equation}
\label{condition on gamma}
\tag{b}
\gamma \in Z^2(G, C^1(X,\K^\times)),
\end{equation}
$\mu \in C^1(G,C^2(X,\K^\times))$ satisfying
\begin{equation}
\label{relation between mu and omega}
\tag{c}
d(\mu_g) = \frac{\omega^g}{\omega}, \qquad g \in G,
\end{equation}
that is,
\begin{equation*}
\frac{\mu_g(y,z)\mu_g(x,yz)}{\mu_g(xy,z)\mu_g(x,y)} = \frac{\omega^g(x,y,z)}{\omega(x,y,z)}, \qquad g \in G,  x,y,z \in X,
\end{equation*}
\begin{equation}
\label{relation between gamma and mu}
\tag{d}
d(\gamma_{g,h}) = (d\mu)_{g,h}, \qquad g,h \in G,
\end{equation}
that is,
\begin{equation*}
\frac{\gamma_{g,h}(x)\gamma_{g,h}(y)}{\gamma_{g,h}(xy)} = \frac{\mu_g(\lexp{h}{x}, \lexp{h}{y})\mu_h(x,y)}{\mu_{gh}(x,y)}, \qquad g,h \in G, x,y \in X
\end{equation*}
and $c \in C^2(X,\K^\times)$ satisfying
\begin{equation}
\label{relation between c, mu, and gamma}
\tag{e}
\frac{c^g(x,y)}{c(x,y)} = \frac{\mu_g(xyx^{-1},x)}{\mu_g(x,y)}
\frac{\gamma_{g\partial(x)g^{-1},g}(y)}{\gamma_{g, \partial(x)}(y)},
\qquad g \in G, x,y \in X,
\end{equation}
\begin{equation}
\label{relation between c, omega, and gamma}
\tag{f}
c(xy,z) = \frac{\omega(x,y,z) \omega((xy)z(xy)^{-1},x,y)}
{\omega(x,yzy^{-1},y) \gamma_{\partial(x),\partial(y)}(z)}
c(x,yzy^{-1}) c(y,z), \qquad x,y,z \in X,
\end{equation}
and
\begin{equation}
\label{relation between c, omega, and mu}
\tag{g}
c(x,yz) = \frac{\omega(xyx^{-1},x,z)}
{\omega(x,y,z) \omega(xyx^{-1},xzx^{-1},x)  \mu_{\partial(x)}(y,z)}
c(x,y) c(x,z), \qquad x,y,z \in X.
\end{equation}
\end{definition}

\begin{note}
\label{notation}
Some remarks about the notation: 
$C^n$ denotes the space of $n$-cochains,
$Z^n$ denotes the space of $n$-cocycles,
and $d$ is the usual {\em differential operator} \cite{B}. (We note
that the definition of $d$ depends on whether the module
under consideration is left or right.)  The action of $G$ on $X$,
$(g,x) \mapsto \lexp{g}{x}$,
induces a right action of $G$ on $C^n(X,\K^\times)$ 
by translations. 
The map $c^g \in C^2(X,\K^\times)$ is defined by
$c^g(x,y):=c(\lexp{g}{x},\lexp{g}{y})$ and the map $\omega^g$ is defined
similarly. 
\end{note}

Quasi-abelian $3$-cocycles on a crossed module $\X=(G,X,\partial)$ form an abelian
group, denoted by $Z_{qa}^3(\X,\K^\times)$, under pointwise
multiplication.

We define the group of coboundaries by
$$
B^3_{qa}(\X,\K^\times) :=
\lb \lp d\eta, \, d\beta, \, g \mapsto d(\beta_g) \frac{\eta^g}{\eta},
\, (x,y) \mapsto \beta_{\partial(x)}(y) 
\frac{\eta(xyx^{-1},x)}{\eta(x,y)} \rp \vline
\mathop{}^{\eta \in C^2(X,\K^\times),}_{\beta \in C^1(G,C^1(X,\K^\times))} \rb.
$$
A direct computation shows that $B^3_{qa}(\X,\K^\times) 
\subseteq Z^3_{qa}(\X,\K^\times)$.
\begin{definition}
The {\em quasi-abelian third cohomology} of a crossed
module $\X$, denoted $\H_{qa}^3(\X,\K^\times)$, is the
quotient of $Z_{qa}^3(\X,\K^\times)$ by $B_{qa}^3(\X,\K^\times)$.
\end{definition}

\begin{remark}
Let $G$ be a group. Consider the crossed module
$\G = (G,G,\id_G)$, where $G$ acts on itself by conjugation.
\begin{enumerate}
\item[(i)] There is a homomorphism $\H^3_{qa}(G,\K^\times) \to \H^3_{qa}(\G,\K^\times)$
induced from
$$
Z^3_{qa}(G,\K^\times) \to Z^3_{qa}(\G,\K^\times):
(\omega,c) \mapsto (\omega,1,1,c). 
$$
\item[(ii)] There also exists a homomorphism
$\H^3(G,\K^\times) \to \H^3_{qa}(\G,\K^\times)$
(see Lemma \ref{3-cocycle to quasi-abelian 3-cocycle}).
\end{enumerate}
\end{remark}

\begin{definition}
A quasi-abelian $3$-cocycle $(\omega, \gamma, \mu, c)$ is normalized if
\begin{eqnarray*}
\omega(x,y,z) = 1, \text{ if } x,y, \text{ or } z \text{ is identity}, & &
\gamma_{g,h}(x) = 1, \text{ if } g,h, \text{ or } x \text{ is identity},\\
\mu_g(x,y) = 1, \text{ if } x,y, \text{ or } g \text{ is identity}, & &
c(x,y) = 1, \text{ if } x \text{ or } y \text{ is identity}.
\end{eqnarray*}
\end{definition}

\begin{note}
Every quasi-abelian $3$-cocycle is cohomologous to a normalized one.
\end{note}

Let $(\omega, \gamma, \mu, c)$ be a normalized quasi-abelian $3$-cocycle on
a crossed module $(G,X,\partial)$. Then
$(\omega|_{\Ker \partial}, c|_{\Ker \partial})$ is an abelian $3$-cocycle
on the (abelian) group $\Ker \partial$.

\begin{definition}
\label{nondegenerate quasi-abelian 3-cocycle}
A normalized quasi-abelian $3$-cocycle $(\omega, \gamma, \mu, c)$ on
a crossed module $(G,X,\partial)$ is {\em nondegenerate} if the
abelian $3$-cocycle $(\omega|_{\Ker \partial}, c|_{\Ker \partial})$ 
on the (abelian) group $\Ker \partial$ is nondegenerate.
\end{definition}

Any homomorphism $(f,F):(G',X',\partial')= \X' \to \X=(G,X,\partial)$ of
crossed modules induces a homomorphism
$$
Z^3_{qa}(\X,\K^\times) \to Z^3_{qa}(\X',\K^\times):
(\omega,\gamma,\mu,c) \mapsto (\omega,\gamma,\mu,c)^{(f,F)}
$$
where 
$$
(\omega,\gamma,\, \mu,c)^{(f,F)} = 
\lp\omega \circ F^{\times 3}, \, (g,h) \mapsto \gamma_{f(g),f(h)} \circ F, 
\, g \mapsto \mu_{f(g)} \circ F^{\times 2}, \, c \circ F^{\times 2} \rp.
$$
It is straight-forward to verify that the above homomorphism preserves coboundaries,
thereby it provides a homomorphism $\H^3_{qa}(\X,\K^\times) \to \H^3_{qa}(\X',\K^\times)$.
Consequently, for any crossed module $\X$ there is a natural action of the 
group of automorphisms $\Aut(\X)$ of $\X$ on $\H^3_{qa}(\X,\K^\times)$.

\end{section}
%%%%%%%%%%%%%%%%%%%%%%%%%%%%%%%%%%%%%%%%%%%%%%%%%%%%%%%%%%%%%%%%%%%%%%%%%%%

%%%%%%%%%%%%%%%%%%%%%%%%%%%%%%%%%%%%%%%%%%%%%%%%%%%%%%%%%%%%%%%%%%%%%%%%%%%
\begin{section}
{Classification of crossed pointed categories}

In this section we classify crossed pointed categories
in terms of quasi-abelian third cohomology of crossed modules. 

%%%%%%%%%%%%%%%%%%%%%%%%%%%%%%%%%%%%%%%%%%%
\begin{subsection}
{Construction of a crossed pointed category from a quasi-abelian
$3$-cocyle on a crossed module}
\label{construction}

Given a quasi-abelian $3$-cocycle $(\omega, \gamma, \mu, c)$ on a
finite crossed module $(G,X,\partial)$, we associate to it a 
crossed pointed category 
$(\C, G, T, \tilde{\gamma}, \iota, \tilde{\mu}, \nu, \tilde{c})$
as follows. As a fusion category $\C = \Vec_X^\omega$. For each
$g \in G$, let $\C_g$ denote the full abelian subcategory consisting of
objects of $\Vec_X^\omega$ supported on $\partial^{-1}(g) \subset X$, i.e., 
objects of $\C_g$ are defined to be finite-dimensional $\partial^{-1}(g)$-graded vector spaces
(we set $\C_g := \{0\}$ if $\partial^{-1}(g)$ is empty).
This defines a
$G$-grading of $\C$: $\displaystyle \C = \bigoplus_{g \in G} \C_g$.

Next we define a functor $T: \underline{G} \to \Aut_\ot(\C)
: g \mapsto T_g$ as follows. 
Let $V \in \Vec_X^\omega$ be a homogeneous object of degree $x \in X$.
The functor 
$T_g: \Vec_X^\omega \xrightarrow{\sim} \Vec_X^\omega$
is defined by $T_g(V):= V$ (as a vector space) and 
the degree of $T_g(V)$ is defined to be $\lexp{g}{x}$. The $T_g$'s are extended
to nonhomogeneous objects and morphisms in the obvious way.

The tensor structure $\tilde{\gamma}$ on the functor 
$T: \underline{G} \to \Aut_\ot(\C)$ is defined by
$$
\gamma_{g,h}(x) \id_V =: \tilde{\gamma}_{g, h}(V)
:T_{gh}(V) \xrightarrow{\sim} (T_g \circ T_h)(V), 
$$
for all homogeneous objects $V \in \Vec_X^\omega$ of degree $x\in X,$ and $g,h \in G$.

The unit-preserving structure $\iota: T_e \xrightarrow{\sim} \id_{\C}$ on
the functor $T: \underline{G} \to \Aut_\ot(\C)$ is defined by
$$
\gamma^{-1}_{e,e}(x) \id_V =: \iota(V): T_{e}(V) \xrightarrow{\sim} 
\id_{\C}(V),
$$
for all homogeneous objects $V \in \Vec_X^\omega$ of degree $x\in X$.

The tensor structure $\tilde{\mu}_g$ on the functor 
$T_g:\Vec_X^\omega \xrightarrow{\sim} \Vec_X^\omega, g\in G$,
is defined by
$$
\mu_g(x, y) \id_{V \ot_{\K} W} =: \tilde{\mu}_g(V, W)
:T_g(V \ot W) \xrightarrow{\sim} T_g(V) \ot T_g(W),
$$
for all homogeneous objects $V,W \in \Vec_X^\omega$ of degrees $x,y\in X$, 
respectively.

The unit-preserving structure $\nu_g$ on the functor 
$T_g:\Vec_X^\omega \xrightarrow{\sim} \Vec_X^\omega, g\in G$,
is defined by
$$
\mu^{-1}_g(e,e) \id_{\K} = : \nu_g:T_g(\K) \xrightarrow{\sim} \K.
$$

For $V,W \in \Vec$, let $\tau_{V,W}$ denote the flip operator
$V \ot_\K W \xrightarrow{\sim} W \ot _\K V: v \ot_\K w \mapsto w \ot_\K v$.
The $G$-braiding $\tilde{c}$ is defined by
$$
c(x,y) \tau_{V,W} =: \tilde{c}(V,W):
V \ot W \xrightarrow{\sim} T_g(W) \ot V,
$$
for all homogeneous objects $V,W \in \Vec_X^\omega$ of degrees
$x,y \in X$. (Here $g = \partial(x)$.)

The crossed module axioms of $(G,X,\partial)$ and the
quasi-abelian $3$-cocycle axioms of
$(\omega, \gamma, \mu, c)$ together ensure that the necessary axioms of a 
crossed category are satisfied. 
Specifically, Condition \eqref{relation between mu and omega} of Definition~\ref{quasi-abelian 3-cocycle}
ensures that $\tilde{\mu_g}$ is a tensor structure on the functor $T_g$ defined above.
Condition \eqref{relation between gamma and mu} of Definition~\ref{quasi-abelian 3-cocycle}
ensures that $\tilde{\gamma}$ is a tensor structure on the functor $T$. 
Conditions \eqref{relation between c, mu, and gamma}-\eqref{relation between c, omega, and mu}
of Definition~\ref{quasi-abelian 3-cocycle} correspond to 
Axioms \eqref{T preserves c}-\eqref{c2} of Definition~\ref{quasi-abelian 3-cocycle}, respectively. 

We will denote the crossed pointed category constructed above by
$$
\C(\omega, \gamma, \mu, c).
$$

\begin{remark}
\label{trivial component nd}
The trivial component $\C(\omega, \gamma, \mu, c)_e$
of $\C(\omega, \gamma, \mu, c)$ (under the $G$-grading) is a braided fusion category.
As a fusion category, $\C(\omega, \gamma, \mu, c)_e = \Vec_{\Ker \partial}^{\omega|_{\Ker \partial}}$.
Suppose that the quasi-abelian $3$-cocycle $(\omega, \gamma, \mu, c)$ is normalized.
Then the braiding on the trivial component is given by
$$
V \ot W \to W \ot V : v \ot w \mapsto c(x,y)w \ot v,
$$
for all homogeneous objects $V,W \in \Vec_{\Ker \partial}^{\omega|_{\Ker \partial}}$
of degrees $x,y \in \Ker \partial$. Clearly, the braided fusion category 
$\C(\omega, \gamma, \mu, c)_e$ is nondegenerate if and only if the quasi-abelian $3$-cocycle
$(\omega, \gamma, \mu, c)$ is nondegenerate in the sense of 
Definition~\ref{nondegenerate quasi-abelian 3-cocycle}. 
\end{remark}

\end{subsection}
%%%%%%%%%%%%%%%%%%%%%%%%%%%%%%%%%%%%%%%%%%%
\begin{subsection}
{Classification}

\begin{proposition}
\label{equivalence}
Let $\C(\omega, \gamma, \mu, c)$ and $\C(\omega', \gamma', \mu', c')$ be
crossed pointed categories constructed in the preceding subsection. Then
$\C(\omega, \gamma, \mu, c) \cong \C(\omega', \gamma', \mu', c')$
as crossed categories if and only if there exists an isomorphism
$(f,F)$ of the underlying (finite) crossed modules such that the quasi-abelian $3$-cocycles 
$(\omega', \gamma', \mu', c')^{(f,F)}$ and $(\omega, \gamma, \mu, c)$
are cohomologous.  
\end{proposition}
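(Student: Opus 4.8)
The plan is to prove the two implications separately, with the forward direction being essentially a bookkeeping exercise and the backward direction requiring an explicit construction of the equivalence. First I would set up notation: let $\X = (G,X,\partial)$ and $\X' = (G',X',\partial')$ be the crossed modules underlying $\C(\omega,\gamma,\mu,c)$ and $\C(\omega',\gamma',\mu',c')$ respectively. The key preliminary observation is that from a crossed pointed category one can \emph{recover} its underlying crossed module: the group $G$ is the group of invertible objects, modulo the grading, the group $X$ is recovered from the isomorphism classes of simple objects, and $\partial$ is recovered from the grading map, while the $G$-action on $X$ is recovered from the tensor autoequivalences $T_g$. So a crossed tensor equivalence $(f,F,\eta,\eta_0,\beta): \C(\omega,\gamma,\mu,c) \xrightarrow{\sim} \C(\omega',\gamma',\mu',c')$ induces an isomorphism $(\bar f, \bar F)$ of the underlying crossed modules — here $\bar f = f$ and $\bar F$ is the permutation of simple objects induced by $F$.

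For the forward direction (``only if''), given such a crossed tensor equivalence, I would unwind what each piece of data $(\eta,\eta_0,\beta)$ does. Since all objects are invertible, $F$ sends the simple object of degree $x' \in X'$ to the simple object of degree $\bar F(x') \in X$ (after identifying via $\bar F$), and the tensor structure $\eta$, the unit-preserving structure $\eta_0$, and the family $\beta$ are all given by scalars: $\eta$ by a $2$-cochain $\bar\eta \in C^2(X',\K^\times)$ and $\beta$ by an element $\bar\beta \in C^1(G',C^1(X',\K^\times))$. The compatibility conditions \eqref{F preserves c}, \eqref{beta and gamma are compatible}, together with the hexagon/pentagon-type axioms defining a tensor functor (which translate $\eta$ into a relation $d\bar\eta = \omega'/\big(\omega\circ \bar F^{\times 3}\big)$), then say precisely that the difference
$$
(\omega',\gamma',\mu',c') \,-\, (\omega,\gamma,\mu,c)^{(f,F)}
$$
lies in $B^3_{qa}(\X',\K^\times)$, with the coboundary being the one determined by the pair $(\bar\eta,\bar\beta)$ appearing in the definition of $B^3_{qa}$. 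Matching up the four components of the coboundary formula — $d\bar\eta$ against $\omega$, $d\bar\beta$ against $\gamma$, the mixed term against $\mu$, and the term $\bar\beta_{\partial'(x)}(y)\,\bar\eta(xyx^{-1},x)/\bar\eta(x,y)$ against $c$ — with the four compatibility axioms of a crossed tensor functor is the bulk of this direction. The main obstacle here is purely organizational: one must be careful about left-versus-right actions and about the exact placement of inverses and conjugations in the cocycle formulas, since the definitions in Section~3 are written with specific conventions.

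For the backward direction (``if''), given an isomorphism $(f,F):\X' \to \X$ and a pair $(\bar\eta,\bar\beta)$ witnessing that $(\omega',\gamma',\mu',c')^{(f,F)^{-1}}$ — or equivalently, after transporting, $(\omega,\gamma,\mu,c)$ and $(\omega',\gamma',\mu',c')$ — differ by the coboundary built from $(\bar\eta,\bar\beta)$, I would build the crossed tensor equivalence directly. Take $F:\C(\omega,\gamma,\mu,c) \to \C(\omega',\gamma',\mu',c')$ to be the functor sending the simple object of degree $x$ to the simple object of degree $F^{-1}(x)$ (note $F:X'\to X$, so $F^{-1}$ goes the right way), take $f$ to be the corresponding isomorphism on gradings, define the tensor structure $\eta$ on $F$ by the scalars $\bar\eta$, the unit-preserving structure $\eta_0$ from the normalization, and the family $\beta$ by the scalars $\bar\beta$. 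Then verifying that $(f,F,\eta,\eta_0,\beta)$ is a crossed tensor functor — i.e. checking axioms \eqref{F preserves grading}, \eqref{F preserves c}, \eqref{beta and gamma are compatible} plus the tensor-functor axioms — amounts to reading the computation of the forward direction in reverse: each axiom becomes one of the equations expressing that the two quasi-abelian $3$-cocycles are cohomologous via $(\bar\eta,\bar\beta)$. Since $f$ is an isomorphism and $F$ is clearly an equivalence (it is a bijection on simple objects), this $(f,F,\eta,\eta_0,\beta)$ is an equivalence of crossed categories, completing the proof. I expect the only real subtlety throughout to be keeping the $(f,F)$-pullback conventions consistent between the statement (which pulls back along $(f,F):\X'\to\X$) and the direction in which the equivalence functor $F$ moves objects.
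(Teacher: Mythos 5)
Your proposal matches the paper's argument in essence: the paper likewise builds the crossed tensor equivalence explicitly from the crossed-module isomorphism and the coboundary data $(\eta,\beta)$ (functor relabelling degrees via $F$, tensor structure given by the scalars $\eta(x,y)$, and $\beta_g(x)$ supplying the isomorphisms $F\circ T_g\cong T'_{f(g)}\circ F$), and then notes the converse is read off from the same construction. The only caveat is your phrase about recovering $G$ from the invertible objects, which is unnecessary since $f:G\to G'$ is part of the data of a crossed tensor functor; otherwise the approach and the axiom-by-axiom matching are exactly as in the paper.
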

\begin{proof}
Suppose $(G,X,\partial)$ and $(G',X',\partial')$ are the underlying (finite) crossed
modules of $\C(\omega, \gamma, \mu, c)$ and $\C(\omega', \gamma', \mu', c')$,
respectively. Let $(f,F)$ be an isomorphism from 
$(G,X,\partial)$ to $(G',X',\partial')$ such that the quasi-abelian $3$-cocycles 
$(\omega', \gamma', \mu', c')^{(f,F)}$ and $(\omega, \gamma, \mu, c)$
are cohomologous via $(\eta, \beta)$ (see Section \ref{quasi-abelian cohomology}).  
In what follows we will construct an equivalence 
$(f,\tilde{F},\tilde{\eta},\eta_0,\tilde{\beta})$ of crossed
categories from $\C(\omega, \gamma, \mu, c)$ to $\C(\omega', \gamma', \mu', c')$
(see Definition \ref{crossed functor}).

Recall that as fusion categories,
$\C(\omega, \gamma, \mu, c)=\Vec_X^\omega$ and
$\C(\omega', \gamma', \mu', c') =\Vec_{X'}^{\omega'}$.
Let $V \in \Vec_X^\omega$ be a homogeneous object of degree $x \in X$.
Define a functor $\tilde{F}: \Vec_X^\omega \to \Vec_{X'}^{\omega'}$,
by $\tilde{F}(V):= V$ (as a vector space) and the degree of $\tilde{F}(V)$ 
is defined to be $F(x)$. The functor $\tilde{F}$ extends
to nonhomogeneous objects and morphisms in the obvious way.

The tensor structure $\tilde{\eta}$ on the functor $\tilde{F}$ is defined by
$$
\eta(x, y) \id_{V \ot_\K W} =: \tilde{\eta}(V, W)
:\tilde{F}(V \ot W) \xrightarrow{\sim} \tilde{F}(V) 
\ot \tilde{F}(W), 
$$
for all homogeneous objects $V,W \in \Vec_X^\omega$ of degrees $x,y\in X$, 
respectively.

The definition of the unit-preserving structure $\eta_0$ on $\tilde{F}$
is obvious. It is easy to verify that $(\tilde{F},\tilde{\eta},\eta_0)$
is an equivalence of tensor categories.

Next we define isomorphisms $\tilde{\beta}_g : F \circ T_g \xrightarrow{\sim}
T'_{f(g)} \circ F, g \in G$, of tensor functors by
$$
\beta_g(x) \id_V =: \tilde{\beta}_g(V):
(\tilde{F} \circ T_g)(V) \xrightarrow{\sim}
(T'_{f(g)} \circ \tilde{F})(V),
$$
for all homogeneous objects $V \in \Vec_X^\omega$ of degree $x\in X$.

It is easy to verify that axioms (i)-(iii) of Definition 
\ref{crossed functor} are satisfied. This
shows that $\C(\omega, \gamma, \mu, c) \cong \C(\omega', \gamma', \mu', c')$,
as crossed categories.

The converse is clear from the above construction.
\end{proof}

\begin{remark}
The above proposition, in particular, shows that if the quasi-abelian
$3$-cocycles $(\omega, \gamma, \mu, c)$ and
$(\omega', \gamma', \mu', c')$ (on the same crossed module $(G, X, \partial)$) are cohomologous, then the corresponding
crossed pointed categories $\C(\omega, \gamma, \mu, c)$ and
$\C(\omega', \gamma', \mu', c')$ are equivalent.
\end{remark}

Recall that for any crossed module $\X$ there is a natural action
of $\Aut(\X)$ on the quasi-abelian third cohomology $\H^3_{qa}(\X,\K^\times)$
of $\X$ (see Section \ref{quasi-abelian cohomology}).

\begin{theorem}
Crossed pointed categories are classified, up to equivalence,
by orbits of the quasi-abelian third cohomology $\H^3_{qa}(\X, \K^\times)$ 
(of a finite crossed module $\X$) under the action of $\Aut(\X)$.
\end{theorem}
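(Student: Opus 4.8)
The plan is to combine the construction of Subsection~\ref{construction} with the equivalence criterion of Proposition~\ref{equivalence} and a converse showing every crossed pointed category arises from the construction. First I would establish surjectivity of the assignment $(\omega,\gamma,\mu,c) \mapsto \C(\omega,\gamma,\mu,c)$ onto equivalence classes of crossed pointed categories. Given an arbitrary crossed fusion category $(\C,G,T,\gamma,\iota,\mu,\nu,c)$ in which $\C$ is pointed, I would first replace $\C$ by an equivalent skeletal model: since every pointed category is equivalent to some $\Vec_X^{\omega}$, choose the group $X := G(\C)$ of (isomorphism classes of) invertible objects and a $3$-cocycle $\omega \in Z^3(X,\K^\times)$ realizing the associativity constraint. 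The $G$-grading on $\C$ assigns to each invertible object a degree in $G$; this gives a map $\partial\colon X \to G$, which is a group homomorphism because $\otimes$ maps $\C_g \times \C_h$ to $\C_{gh}$. The condition $T_g(\C_h) \subseteq \C_{ghg^{-1}}$ forces $T_g$ to permute the invertibles according to $x \mapsto \lexp{g}{x}$ for a suitable action of $G$ on $X$, and the two crossed-module identities $\lexp{\partial(x)}{x'} = xx'x^{-1}$ and $\partial(\lexp{g}{x}) = g\partial(x)g^{-1}$ come respectively from evaluating the $G$-braiding on invertibles (which lands $X \otimes Y$ in degree $\partial(x)\partial(y)$ but also presents it as $T_{\partial(x)}(Y)\otimes X$) and from the compatibility of $T$ with the grading. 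So $(G,X,\partial)$ is a finite crossed module.

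Next I would extract the $3$-cochain data. Evaluating the tensor structures $\gamma_{g,h}$ and $\mu_g$, the unit structures $\iota,\nu$, and the $G$-braiding $c$ on invertible objects (on which all morphism spaces are one-dimensional) yields scalars $\gamma_{g,h}(x) \in \K^\times$, $\mu_g(x,y) \in \K^\times$, and $c(x,y) \in \K^\times$; after normalizing the equivalence $\C \simeq \Vec_X^\omega$ one may take all of these to be normalized. The point is then a dictionary: axiom~\eqref{T preserves c} of Definition~\ref{crossed category} translates exactly into \eqref{relation between c, mu, and gamma}, axiom~\eqref{c1} into \eqref{relation between c, omega, and gamma}, axiom~\eqref{c2} into \eqref{relation between c, omega, and mu}, the pentagon/hexagon for $\tilde\mu_g$ being a tensor structure gives \eqref{relation between mu and omega}, the cocycle condition on $\gamma$ as a $2$-cocycle valued in $1$-cochains gives \eqref{condition on gamma} together with \eqref{relation between gamma and mu}, and $\omega \in Z^3(X,\K^\times)$ is \eqref{condition on omega}. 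This is a bookkeeping verification: each axiom, restricted to homogeneous invertible objects, becomes a scalar identity, and one checks it matches the corresponding clause of Definition~\ref{quasi-abelian 3-cocycle}. Thus $(\omega,\gamma,\mu,c) \in Z^3_{qa}(\X,\K^\times)$ and, by construction, $\C \cong \C(\omega,\gamma,\mu,c)$ as crossed categories. Combined with the construction of Subsection~\ref{construction} this proves that the assignment is a bijection onto isomorphism classes.

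Finally I would assemble the classification. By the previous paragraph, every crossed pointed category is of the form $\C(\omega,\gamma,\mu,c)$ for some finite crossed module $\X = (G,X,\partial)$ and some $(\omega,\gamma,\mu,c) \in Z^3_{qa}(\X,\K^\times)$; this is a surjection from the set $\bigsqcup_{\X} Z^3_{qa}(\X,\K^\times)$ onto equivalence classes of crossed pointed categories. By Proposition~\ref{equivalence}, $\C(\omega,\gamma,\mu,c) \cong \C(\omega',\gamma',\mu',c')$ if and only if there is an isomorphism $(f,F)$ of the underlying crossed modules with $(\omega',\gamma',\mu',c')^{(f,F)}$ cohomologous to $(\omega,\gamma,\mu,c)$. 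Restricting to a single fixed crossed module $\X$, this says precisely that two cocycles in $Z^3_{qa}(\X,\K^\times)$ give equivalent categories if and only if their classes in $\H^3_{qa}(\X,\K^\times)$ lie in the same $\Aut(\X)$-orbit (here one uses that the $\Aut(\X)$-action on $\H^3_{qa}(\X,\K^\times)$ is exactly the one induced by the cocycle-level maps $(-)^{(f,F)}$, as recorded at the end of Section~\ref{quasi-abelian cohomology}), while cocycles on non-isomorphic crossed modules give inequivalent categories because an equivalence of crossed categories induces an isomorphism of the underlying crossed modules via $f$ and the induced permutation of invertibles. Hence equivalence classes of crossed pointed categories are in bijection with $\bigsqcup_{\X}\bigl(\H^3_{qa}(\X,\K^\times)/\Aut(\X)\bigr)$, which is the assertion of the theorem.

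The main obstacle is the surjectivity step, and within it the careful verification that each of the crossed-category axioms \eqref{T preserves c}--\eqref{c2}, once restricted to invertible objects and stripped of associativity and unit constraints (which contribute the $\omega$-factors), reproduces the corresponding quasi-abelian cocycle relation \eqref{relation between c, mu, and gamma}--\eqref{relation between c, omega, and mu}; keeping track of the precise placement of conjugations $xyx^{-1}$, the arguments $\lexp{g}{x}$ of the translated cochains, and the $\partial$'s appearing in \eqref{relation between c, omega, and gamma}--\eqref{relation between c, omega, and mu} is delicate, and it is also where one must be careful that a chosen skeletal model can be normalized so that $\iota$ and $\nu$ are trivial and $\gamma,\mu,c$ are normalized without destroying the cocycle property. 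The descent from isomorphism classes to the orbit space, by contrast, is formal once Proposition~\ref{equivalence} is in hand.
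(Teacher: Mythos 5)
Your proposal is correct and follows the same route as the paper: the paper's proof simply asserts that every crossed pointed category is equivalent to some $\C(\omega,\gamma,\mu,c)$ and then invokes Proposition~\ref{equivalence}. You have merely filled in the details of that first assertion (extracting the crossed module from the invertibles, the grading, and the $G$-braiding, and reading off the cocycle data on a skeletal model), which is exactly the intended argument.
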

\begin{proof}
Every crossed pointed category is equivalent to some 
$\C(\omega, \gamma, \mu, c)$ with underlying (finite) crossed module $\X$. The 
theorem now follows from Proposition \ref{equivalence}.

\end{proof}

\end{subsection}
%%%%%%%%%%%%%%%%%%%%%%%%%%%%%%%%%%%%%%%%%%%

\end{section}
%%%%%%%%%%%%%%%%%%%%%%%%%%%%%%%%%%%%%%%%%%%%%%%%%%%%%%%%%%%%%%%%%%%%%%%%%%%
\begin{section}
{Equivariantization of $\C(\omega, \gamma, \mu, c)$}
\label{equivariantization}

Throughout this section,
let $(\omega, \gamma, \mu, c)$ be a normalized quasi-abelian $3$-cocycle on a
finite crossed module $(G,X,\partial)$. In Subsection \ref{construction}
we associated to $(\omega, \gamma, \mu, c)$ a crossed
pointed category $\C(\omega, \gamma, \mu, c)$. Our goal in this section
is to apply the {\em equivariantization} process to 
$\C(\omega, \gamma, \mu, c)$ and study the resulting braided fusion category.

%%%%%%%%%%%%%%%%%%%%%%%%%%%%%%%%%%%%%%%%%%%

\begin{subsection}
{Description}
\label{description}

Recall that as a fusion category,
$\C(\omega, \gamma, \mu, c) = \Vec_X^\omega$. We begin with 
the following.

\begin{proposition}
\label{description of equivariantization}
An object of the equivariantization category $\C(\omega, \gamma, \mu, c)^G$ is a $X$-graded vector space
$V$ together with a twisted action $\rt$ of $G$ on $V$ which is compatible
with the grading, in the sense of
\begin{equation}
\label{twisted action condition}
gh \rt v = \gamma_{g,h}(x) (g \rt (h \rt v))
\end{equation}
$$
e \rt v = v, \qquad \degree(g \rt v) = \lexp{g}{x},
$$
for all $v \in V$ homogeneous of degree $x \in X$ and $g,h \in G$.
Morphisms in the category are linear maps preserving
the twisted action and grading. The twisted action of $G$ on
the tensor product is given by
\begin{equation}
\label{twisted tensor product}
g \rt(v \ot w) = \mu_g(x,y) (g \rt v \ot g \rt w),
\end{equation}
for homogeneous $v,w$ of degrees $x,y \in X$, respectively. The associativity
constraint on the category is given by
\begin{equation*}
(u \ot v) \ot w \mapsto \omega(x,y,z) u \ot (v \ot w),
\end{equation*}
for all homogeneous $u,v,w$ of degrees $x,y,z \in X$.
The braiding on the category is given by 
\begin{equation}
\label{twisted braiding}
v \ot w \mapsto c(x,y) (\partial(x) \rt w \ot v),
\end{equation}
for all homogeneous $v,w$ of degrees $x,y \in X$.
\end{proposition}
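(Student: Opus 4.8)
The plan is to unravel the definition of the equivariantization category $\C(\omega,\gamma,\mu,c)^G$ applied to the specific crossed pointed category constructed in Subsection~\ref{construction}, and check each claimed formula against the general definitions in Subsection on equivariantization and the description of the braiding in equation~\eqref{braiding in equivariantization}. First I would recall that a $G$-equivariant object is a pair $(V,\{u_g\}_{g\in G})$ with $u_g\colon T_g(V)\xrightarrow{\sim}V$ satisfying $u_{gh}=u_g\circ T_g(u_h)\circ\tilde\gamma_{g,h}(V)$. Since $T_g(V)=V$ as a vector space (only the grading changes, from $x$ to $\lexp{g}{x}$), each $u_g$ is just a linear automorphism of the underlying space, and I would \emph{define} the twisted action by $g\rt v:=u_g(v)$. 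The grading-compatibility $\degree(g\rt v)=\lexp{g}{x}$ is then immediate from the fact that $u_g$ is a morphism $T_g(V)\to V$ and $T_g(V)$ has degree $\lexp{g}{x}$ in degree-$x$ components. The condition $e\rt v=v$ comes from normalization ($\tilde\gamma_{e,e}=\iota^{-1}$, or one absorbs $\iota$ into the conventions), and plugging $\tilde\gamma_{g,h}(V)=\gamma_{g,h}(x)\id_V$ into~\eqref{equivariant structure condition} gives exactly~\eqref{twisted action condition}. Conversely any such twisted action defines an equivariant structure, and morphisms match up tautologically, so this establishes the object/morphism description.

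Next I would verify the tensor product formula. By~\eqref{tensor product in equivariantization}, the equivariant structure on $V\ot W$ is $\tilde u_g=(u_g\ot u'_g)\circ\tilde\mu_g(V,W)$, and since $\tilde\mu_g(V,W)=\mu_g(x,y)\id_{V\ot_\K W}$ on degree-$(x,y)$ components, this reads $g\rt(v\ot w)=\mu_g(x,y)(g\rt v\ot g\rt w)$, which is~\eqref{twisted tensor product}. The associativity constraint on $\C^G$ is inherited from that of $\C=\Vec_X^\omega$ (equivariantization does not change the associativity constraint, only adds equivariant structures), so it is the same formula $(u\ot v)\ot w\mapsto\omega(x,y,z)\,u\ot(v\ot w)$ from Subsection~\ref{pointed categories}.

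Finally, and this is where I expect the only real bookkeeping, I would compute the braiding using~\eqref{braiding in equivariantization}. For a homogeneous $v$ of degree $x$ (so $v$ lies in the component $V_{x}$, which sits inside the $\partial(x)$-graded piece $V_{\partial(x)}$ in the $G$-grading of $\C$), the map $\tilde c_{V,W}$ is the composite $v\ot w\mapsto \tilde c(V_x,W)(v\ot w)$ followed by applying $u'_{\partial(x)}\ot\id$. Since $\tilde c(V,W)=c(x,y)\,\tau_{V,W}$ sends $v\ot w$ to $c(x,y)\,w\ot v$ with $w$ now regarded in $T_{\partial(x)}(W)$, and then $u'_{\partial(x)}\ot\id$ sends $w\mapsto \partial(x)\rt w$, the net effect is $v\ot w\mapsto c(x,y)(\partial(x)\rt w\ot v)$, which is~\eqref{twisted braiding}. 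The main obstacle, such as it is, is keeping the grading conventions straight: one must check that $\partial^{-1}(g)$-supported objects are exactly those in $\C_g$, that the $G$-braiding $\tilde c$ of the constructed category uses $g=\partial(x)$, and that the direct-sum decomposition in~\eqref{braiding in equivariantization} over $g\in G$ matches the decomposition of $V$ into $X$-homogeneous pieces grouped by $\partial$. None of this requires the cocycle axioms—those were already used in Subsection~\ref{construction} to guarantee $\C(\omega,\gamma,\mu,c)$ is a genuine crossed category—so the proof is a direct translation of the general equivariantization recipe into the pointed setting.
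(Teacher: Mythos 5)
Your proposal is correct and follows the same route as the paper's own (much terser) proof: identify the twisted action with the equivariant structure $u_g$, translate \eqref{equivariant structure condition} into \eqref{twisted action condition} using $\tilde\gamma_{g,h}(V)=\gamma_{g,h}(x)\id_V$, and read off the tensor product and braiding from \eqref{tensor product in equivariantization} and \eqref{braiding in equivariantization}. Your version simply spells out the bookkeeping that the paper leaves implicit.
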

\begin{proof}
The action $\rt$ referred to in the statement of the proposition
corresponds to equivariant structure \eqref{equivariant structure}.
Equation \ref{twisted action condition} corresponds to 
\eqref{equivariant structure condition}. The definition of the tensor product 
\eqref{twisted tensor product} comes from \eqref{tensor product in equivariantization}
and the definition of the braiding \eqref{twisted braiding} comes 
\eqref{braiding in equivariantization}.
\end{proof}

\begin{remark}
There is a simple special case of the above description. 
Namely, take the quasi-abelian 
$3$-cocycle $(\omega,\gamma,\mu,c)$ (on the finite crossed
module $(G,X,\partial))$ to be trivial. Then
the corresponding equivariantization category 
$\C(1,1,1,1)^G$ admits a simple description: objects of this category are
$G$-equivariant vector bundles on $X$. We note that this braided fusion
category was considered in \cite{Ba}. This category
is not nondegenerate in general: by Proposition~\ref{when nd}
it is nondegenerate if and only if $\partial$ is an isomorphism.
In this case,  the category is equivalent to $D(G)$-$\Mod$, as a braided
fusion category. 
\end{remark}

\begin{theorem}
\label{braided g-t categories}
Every braided group-theoretical category is equivalent
to $\C(\xi)^G$, for some normalized quasi-abelian $3$-cocycle $\xi$ on
a finite crossed module $(G,X,\partial)$.
\end{theorem}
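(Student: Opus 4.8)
The plan is to show that every braided group-theoretical category $\D$ arises as an equivariantization $\C(\xi)^G$ by reverse-engineering a crossed pointed category whose equivariantization is $\D$. First I would recall the structural characterization of (braided) group-theoretical categories from \cite{DGNO, NNW}: a braided fusion category $\D$ is group-theoretical if and only if it contains a Tannakian subcategory $\Rep(G)$ (coming from a pointed dual) such that the de-equivariantization $\D_G$ is a pointed braided $G$-crossed category. More precisely, the equivariantization/de-equivariantization correspondence sets up an equivalence between braided fusion categories containing $\Rep(G)$ as a Tannakian subcategory and braided $G$-crossed fusion categories $\E$ with $\E^G \cong \D$. So the first step is: given $\D$ braided group-theoretical, produce the finite group $G$ and the Tannakian subcategory $\Rep(G) \subseteq \D$ such that $\E := \D_G$ is pointed. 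The existence of such a $G$ is exactly the content of the group-theoretical hypothesis together with the results of \cite{NNW} on braided group-theoretical categories.

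The second step is to identify the crossed pointed category $\E = \D_G$ with one of the form $\C(\omega,\gamma,\mu,c)$. Since $\E$ is a crossed pointed fusion category (its underlying fusion category is pointed, equivalent to some $\Vec_X^\omega$), the classification theorem proved earlier in Section 4 (our Theorem classifying crossed pointed categories) applies: $\E$ comes with a $G$-grading, a $G$-action by tensor autoequivalences with tensor structures, and a $G$-braiding, and every such datum on a pointed category is captured, up to equivalence, by a normalized quasi-abelian $3$-cocycle on a finite crossed module $\X = (G, X, \partial)$ — where $X = \Irr(\E)$ is the group of invertible objects, and $\partial : X \to G$ records the grading. Thus $\E \cong \C(\xi)$ for some normalized $\xi \in Z^3_{qa}(\X, \K^\times)$.

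The third step is to combine these: applying equivariantization to both sides of $\E \cong \C(\xi)$ and using that de-equivariantization followed by equivariantization recovers the original category (when the Tannakian subcategory is $\Rep(G)$), we get $\D \cong \E^G \cong \C(\xi)^G$ as braided fusion categories. One must check that the equivalence $\E \cong \C(\xi)$ of crossed categories (which, by Definition~\ref{crossed functor}, is an equivalence respecting grading, $G$-action, and $G$-braiding) induces an equivalence of the equivariantizations as \emph{braided} fusion categories; this follows from the explicit description of the braiding on $\C^G$ in \eqref{braiding in equivariantization}, since a crossed equivalence intertwines all the data entering that formula.

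The main obstacle is the first step — producing the right group $G$ and verifying that the de-equivariantization $\D_G$ is genuinely pointed (rather than merely group-theoretical). This is where the full force of the classification of braided group-theoretical categories in \cite{NNW} is needed: one must locate a maximal Tannakian subcategory $\Rep(G)$ whose de-equivariantization trivializes the category down to a pointed one, and argue that group-theoreticity guarantees such a subcategory exists. Once that is in hand, the remaining steps are essentially bookkeeping: invoking the Section 4 classification and tracking the braiding through equivariantization. I would also remark that this theorem should be viewed as a converse to the (easier) observation that each $\C(\xi)^G$ is group-theoretical, since it is the equivariantization of a pointed category and hence — being dual to a pointed category in the appropriate sense — group-theoretical.
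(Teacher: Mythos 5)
Your proposal is correct and follows essentially the same route as the paper: the paper's proof simply cites \cite{NNW} for the fact that every braided group-theoretical category is the equivariantization of a pointed (crossed) category, and then implicitly invokes the Section~4 classification to write that pointed crossed category as $\C(\xi)$. Your write-up merely makes explicit the de-equivariantization/Tannakian-subcategory mechanism behind the \cite{NNW} result and the bookkeeping of braidings under equivariantization, both of which the paper leaves to the reader.
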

\begin{proof}
This follows from \cite{NNW}, where it was shown that every braided group-theoretical
category is the equivariantization of a pointed category.
\end{proof}

\begin{lemma}
\label{phi is a 2-cocycle}
For any $x \in X$, let $\Stab_G(x)$ denote the stabilizer of
$x$ in $G$, i.e., $\Stab_G(x) = \{g \in G \mid \lexp{g}{x}=x\}$.
Define $\phi_x:\Stab_G(x) \times \Stab_G(x) \to \K^\times$ by
$$
\phi_x(g,h) := \gamma_{g,h}(x), \qquad g,h \in \Stab_G(x).
$$
Then $\phi_x$ is a $2$-cocycle on $\Stab_G(x)$.
\end{lemma}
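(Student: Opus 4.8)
The plan is to verify directly that $\phi_x$ satisfies the $2$-cocycle identity on $\Stab_G(x)$, using condition~\eqref{condition on gamma} of Definition~\ref{quasi-abelian 3-cocycle}, namely that $\gamma \in Z^2(G, C^1(X,\K^\times))$. First I would write out explicitly what it means for $\gamma$ to be a $2$-cocycle valued in the $G$-module $C^1(X,\K^\times)$ (with the right action of $G$ by translation described in Note~\ref{notation}): for all $g,h,k \in G$ and $x \in X$,
$$
\gamma_{h,k}(x)\,\gamma_{g,hk}(x) = \gamma_{gh,k}(x)\,\gamma_{g,h}(\lexp{k}{x}).
$$
(The exact placement of the translation by $k$ follows the right-module convention; I would state it in whichever form the paper's differential $d$ dictates.)

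The key observation is that when $g,h,k$ all lie in $\Stab_G(x)$, the troublesome translated term $\gamma_{g,h}(\lexp{k}{x})$ collapses to $\gamma_{g,h}(x)$, since $\lexp{k}{x}=x$. Thus the ambient cocycle identity for $\gamma$ restricts, on the subgroup $\Stab_G(x)$, to
$$
\phi_x(h,k)\,\phi_x(g,hk) = \phi_x(gh,k)\,\phi_x(g,h),
$$
which is exactly the (ordinary, trivial-coefficient) $2$-cocycle condition for $\phi_x: \Stab_G(x)\times \Stab_G(x) \to \K^\times$. I would also remark that $\Stab_G(x)$ is genuinely a subgroup of $G$ (immediate, since the $G$-action is by group automorphisms, so $\lexp{g}{x}=x$ and $\lexp{h}{x}=x$ give $\lexp{gh}{x}=x$ and $\lexp{g^{-1}}{x}=x$), so that the restriction makes sense.

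I do not anticipate a genuine obstacle here; the only point requiring care is bookkeeping — matching the left/right module convention for $d$ so that the translated argument in the $\gamma$-cocycle identity is the one that becomes trivial on the stabilizer. If the convention placed the translation on a different factor, say $\gamma_{h,k}(\lexp{g^{-1}}{x})$ or with an inverse, the same cancellation still occurs because every group element involved fixes $x$; so the conclusion is robust. I would close by noting that this $2$-cocycle $\phi_x$ is precisely the obstruction governing the projective action of $\Stab_G(x)$ on the degree-$x$ homogeneous component appearing in Proposition~\ref{description of equivariantization}, which motivates its introduction.
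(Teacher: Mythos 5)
Your proof is correct and takes essentially the same route as the paper's: both write out the cocycle identity for $\gamma \in Z^2(G, C^1(X,\K^\times))$ in exactly the form $\gamma_{h,k}(x)\,\gamma_{g,hk}(x) = \gamma_{gh,k}(x)\,\gamma_{g,h}(\lexp{k}{x})$ and observe that restricting all of $g,h,k$ to $\Stab_G(x)$ collapses the translated argument, yielding the ordinary $2$-cocycle condition for $\phi_x$. Your extra remarks on the subgroup property and the convention-robustness are fine but not needed beyond what the paper does.
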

\begin{proof}
The Condition \eqref{condition on gamma} on $\gamma$ in Definition
\ref{quasi-abelian 3-cocycle} means that
$$
\gamma_{h,k}(x) \gamma_{g, hk}(x)
= \gamma_{gh,k}(x)  \gamma_{g,h}(\lexp{k}{x}),
$$
for all $g,h,k \in G, x \in X$. Restricting to $\Stab_G(x)$ we
get the stated assertion.
\end{proof}

Let $R$ denote a complete set of representatives of orbits of $X$
under the action of $G$.

\begin{proposition}
\label{simples}
The set of isomorphism classes of simple objects of $\C(\omega, \gamma, \mu, c)^G$
is in bijection with isomorphism classes of the set 
\begin{equation}
\Gamma := \{ (a,V) \mid a\in R, \; V \text{ is an irreducible module over } 
\K_{\phi_a}[\Stab_G(a)] \},
\end{equation}
where $\phi_a$ is the $2$-cocycle defined in Lemma \ref{phi is a 2-cocycle}.
\end{proposition}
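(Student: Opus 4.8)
The plan is to describe a simple object of $\C(\omega, \gamma, \mu, c)^G$ explicitly using Proposition~\ref{description of equivariantization} and then decompose an arbitrary equivariant object into such pieces. First I would observe that since the underlying fusion category is $\Vec_X^\omega$, an object of the equivariantization is an $X$-graded vector space $V = \bigoplus_{x \in X} V_x$ equipped with a twisted $G$-action $\rt$ satisfying $\degree(g \rt v) = \lexp{g}{x}$ for $v \in V_x$ and the $2$-cocycle-twisted associativity condition~\eqref{twisted action condition}. The grading condition forces the twisted action to permute the homogeneous components along $G$-orbits in $X$: for each $g \in G$ the map $\rt g$ restricts to an isomorphism $V_x \xrightarrow{\sim} V_{\lexp{g}{x}}$. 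Consequently $V$ decomposes as a direct sum over the $G$-orbits in $X$, i.e. over elements $a \in R$, of subobjects supported on a single orbit; and each of these is an equivariant object in its own right. So it suffices to classify the simple equivariant objects supported on a single orbit $G \cdot a$.

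Next, for a fixed $a \in R$, I would show that an equivariant object $V$ supported on $G \cdot a$ is determined by the ``fiber'' $V_a$ together with the structure that the twisted action induces on it. Concretely, choose coset representatives $\{t_i\}$ for $\Stab_G(a)$ in $G$, with $\lexp{t_i}{a}$ running over the orbit; then $V \cong \bigoplus_i V_{\lexp{t_i}{a}}$, and each summand is identified with $V_a$ via the (invertible) twisted action of $t_i$. Under this identification the twisted action of an arbitrary $g \in G$ is reconstructed from its restriction to $\Stab_G(a)$, and by condition~\eqref{twisted action condition} that restriction makes $V_a$ into a module over the twisted group algebra $\K_{\phi_a}[\Stab_G(a)]$, where $\phi_a(g,h) = \gamma_{g,h}(a)$ is the $2$-cocycle of Lemma~\ref{phi is a 2-cocycle} (the cocycle condition on $\phi_a$ is exactly what makes the reconstructed $\rt$ well defined and associative). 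This assignment $V \mapsto V_a$ is functorial and an equivalence between the category of equivariant objects supported on $G \cdot a$ and $\K_{\phi_a}[\Stab_G(a)]$-$\Mod$, so it takes simples to simples and respects isomorphism classes. Combining with the orbit decomposition gives the bijection with $\Gamma$ as stated.

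The main obstacle — really the only subtlety — is checking that the twisted action on $V_a$ depends only on the orbit representative $a$ and the choice is compatible with the $\phi_a$-twisted algebra structure, i.e. verifying that the recipe ``extend from $\Stab_G(a)$ to $G$ using the chosen coset representatives'' actually produces a twisted action satisfying~\eqref{twisted action condition} and that a different choice of representatives yields an isomorphic equivariant object. This is a routine but slightly fiddly bookkeeping argument with the $2$-cocycle $\gamma$; I would package it as the statement that the induction/restriction functors between $\K_{\phi_a}[\Stab_G(a)]$-$\Mod$ and the subcategory of equivariant objects supported on $G \cdot a$ are mutually inverse equivalences, which makes the bijection on simples immediate and also (implicitly) sets up the later computations of fusion rules and dimensions.
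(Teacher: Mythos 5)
Your argument is correct, but it runs in the opposite direction from the paper's. You decompose an arbitrary equivariant object over the $G$-orbits in $X$ (using that each $g\rt(-)$ is an isomorphism $V_x\xrightarrow{\sim}V_{\lexp{g}{x}}$, so that orbit-supported pieces are subobjects and there are no morphisms between pieces supported on distinct orbits), and then exhibit restriction-to-the-fiber $V\mapsto V_a$ as an equivalence between the orbit-$a$ block and $\K_{\phi_a}[\Stab_G(a)]$-$\Mod$; the bijection on simples falls out of the equivalence. The paper instead goes the induction direction only: it writes down the induced object explicitly via \eqref{induction action}, checks by three applications of the cocycle condition on $\gamma$ that this satisfies \eqref{twisted action condition}, and then establishes completeness not by constructing an inverse functor but by a Frobenius--Perron dimension count, $\sum_a\sum_V |\lexp{G}{a}|^2(\dim V)^2=|G||X|=\FPdim(\C(\omega,\gamma,\mu,c)^G)$. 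Your route proves slightly more (a block decomposition of $\C(\omega,\gamma,\mu,c)^G$ as an abelian category, with no need for the dimension count), and is arguably cleaner conceptually; the paper's route has the advantage of producing the explicit formula for the induced simple objects, which is what gets used later in Subsection~\ref{twist and S-matrix} to compute the twist and $S$-matrix. Note that the ``fiddly bookkeeping'' you defer --- checking that the action reconstructed from $\Stab_G(a)$ via coset representatives satisfies \eqref{twisted action condition} and is independent of choices up to isomorphism --- is exactly the computation the paper carries out, so neither approach avoids it; if you flesh out your proof you should display that verification rather than leave it as a remark.
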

\begin{proof}
Let $\Irr(\C(\omega, \gamma, \mu, c)^G)$ denote the set of simple objects 
of $\C^G$. We will define a map
\begin{equation}
\label{induction map}
\Gamma \to \Irr(\C(\omega, \gamma, \mu, c)^G)
\end{equation}
and show that it induces a bijection between the isomorphism classes of the
source and target sets.  Let $g_1,g_2, \cdots$ 
be coset representatives of $\Stab_G(a)$ in $G$. 
Pick any $(a,V) \in \Gamma$.
We define the map \eqref{induction map} by
\begin{equation}
\label{induction}
(a,V) \mapsto 
\tilde{V} = \oplus_{g_i} V_{\lexp{g_i}{a}}, 
\end{equation}
where $V_{\lexp{g_i}{a}} = V$ as a vector space and
$\degree(V_{\lexp{g_i}{a}}) = \lexp{g_i}{a}$. The twisted action of
$G$ on $\tilde{V}$ is given by
\begin{equation}
\label{induction action}
h \rt v := \frac{\gamma_{g_j,t}(a)}{\gamma_{h, g_i}(a)} (t \rt v),
\end{equation}
for all $v \in \tilde{V}$ homogeneous of degree $\lexp{g_i}{a}$ with
$t \in \Stab_G(a)$ uniquely determined by the equation $hg_i = g_jt$.
The degree of $h \rt v$ is defined to be $\lexp{g_j}{a}$.

To prove that the map \eqref{induction map} (defined via \eqref{induction}
and \eqref{induction action}) is well-defined we need to show that
the action defined in \eqref{induction action} satisfies \eqref{twisted action condition}.
This amounts to verifying that the scalar
$$
\frac{\gamma_{g_k,st}(a) \gamma_{s,t}(a)}{\gamma_{gh,g_i}(a)}
$$
is equal to the scalar
$$
\frac{\gamma_{g,h}(\lexp{g_i}{a})\gamma_{g_j,t}(a) \gamma_{g_k,s}(a)}
{\gamma_{h,g_i}(a) \gamma_{g,g_j}(a)}
$$
for all $g,h \in G, s,t \in \Stab_G(a)$ with $hg_i=g_jt$ and $gg_j=g_ks$.
The equality of the two scalars follows from applying Condition \eqref{condition on gamma}
on $\gamma$ in Definition \ref{quasi-abelian 3-cocycle} successively to the
quadruples $(g,h,g_i,a),(g,g_j,t,a),(g_k,s,t,a)$.

We now show that the map \eqref{induction map} induces a bijection between
isomorphism classes of the source and target sets.
It is clear that the map \eqref{induction map} preserves isomorphic objects.
Furthermore, the object in $\Irr(\C(\omega, \gamma, \mu, c)^G)$
corresponding to $(a,V) \in \Gamma$ has FP-dimension
equal to $|\lexp{G}{a}| \dim_\K V$, where $\lexp{G}{a}$ denotes the orbit containing $a$.
The sum of squares of FP-dimensions of isomorphism classes of objects in the image
of \eqref{induction map} is 
\begin{equation}
\begin{split}
\sum_{a \in R} \;\; \sum_{V \in \Irr(\K_{\phi_a [\Stab_G(a)]})} |\lexp{G}{a}|^2 (\dim V)^2 
&= \sum_{a \in R} |\lexp{G}{a}|^2 |\Stab_G(a)| \\
&= \sum_{a \in R} |\lexp{G}{a}||G| \\
&= |G||X| \\
&= \FPdim(\C(\omega, \gamma, \mu, c)^G),
\end{split}
\end{equation}
completing the proof.
\end{proof}

\end{subsection}
%%%%%%%%%%%%%%%%%%%%%%%%%%%%%%%%%%%%%%%%%%%

%%%%%%%%%%%%%%%%%%%%%%%%%%%%%%%%%%%%%%%%%%%
\begin{subsection}
{Twist and $S$-matrix}
\label{twist and S-matrix}
As before, $R$ denotes a complete set of representatives of orbits of $X$
under the action of $G$.
By Proposition~\ref{simples}, the simple objects of $\C(\omega,\gamma,\mu,c)^G$
correspond to pairs $(a,\chi)$, where $a \in R$ and $\chi$ is an irreducible
$\phi_a$-character of $\Stab_G(a)$. 
Note that $\C(\omega,\gamma,\mu,c)^G$ admits a canonical twist $\theta$
with respect to which categorical dimensions coincide with FP-dimensions.
The values of $\theta$ on simple objects are given by
$$
\theta_{(a,\chi)} = c(a,a) \frac{\chi(\partial(a))}{\deg \chi}.
$$
A direct calculation shows that the $S$-matrix $S$ is given by
$$
S_{(a,\chi), (b,\chi')} = 
\sum_{\substack{x \in (\lexp{G}{a})\\ y \in (\lexp{G}{b}) \cap C_X(x)}}
c(x,y)c(y,x) \frac{\gamma_{g,\partial(\lexp{g^{-1}}{y})}(a) 
\gamma_{h,\partial(\lexp{h^{-1}}{x})}(b)}
{\gamma_{\partial(y),g}(a) \gamma_{\partial(x),h}(b)}
\chi(\lexp{g^{-1}}{y}) \chi'(\lexp{h^{-1}}{x}),
$$
where in each summand $g$ and $h$ are defined by 
$\lexp{g}{a}=x$ and $\lexp{h}{b}=y$. (Note that the choice
of $g$ and $h$ does not affect the sum.)

\end{subsection}
%%%%%%%%%%%%%%%%%%%%%%%%%%%%%%%%%%%%%%%%%%%
\begin{subsection}
{Modularity}

As before, $(\omega,\gamma,\mu,c)$
is a normalized quasi-abelian $3$-cocycle on a finite crossed
module $(G,X,\partial)$. We have the following. 

\begin{proposition}
\label{when nd}
The braided fusion category $\C(\omega,\gamma,\mu,c)^G$ is nondegenerate
if and only if the homomorphism $\partial$ is surjective and 
$(\omega,\gamma,\mu,c)$ is nondegenerate in the sense of
Definition~\ref{nondegenerate quasi-abelian 3-cocycle}.
\end{proposition}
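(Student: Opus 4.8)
The plan is to reduce the modularity of $\C(\omega,\gamma,\mu,c)^G$ to a statement about the $G$-grading on the crossed pointed category $\C(\omega,\gamma,\mu,c)$ together with the nondegeneracy of its trivial component, and then invoke the criterion recalled in Remark~\ref{when C^G is nd}. By that remark, $\C(\omega,\gamma,\mu,c)^G$ is nondegenerate if and only if the $G$-grading of $\C(\omega,\gamma,\mu,c)$ is faithful and the trivial component $\C(\omega,\gamma,\mu,c)_e$ is nondegenerate. So the proof splits into two matching equivalences.

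First I would handle the grading. By construction (Subsection~\ref{construction}), the homogeneous component $\C(\omega,\gamma,\mu,c)_g$ consists of objects supported on $\partial^{-1}(g) \subseteq X$, and it is the zero category exactly when $\partial^{-1}(g)$ is empty. Hence the $G$-grading is faithful if and only if $\partial^{-1}(g) \neq \emptyset$ for every $g \in G$, which is precisely the surjectivity of $\partial$. This is immediate and should take only a sentence or two.

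Second, I would address the trivial component. Here I can quote Remark~\ref{trivial component nd} directly: it states that $\C(\omega,\gamma,\mu,c)_e = \Vec_{\Ker\partial}^{\omega|_{\Ker\partial}}$ as a fusion category, with braiding $v \ot w \mapsto c(x,y)\, w \ot v$ for homogeneous $v,w$ of degrees $x,y \in \Ker\partial$, and that this braided fusion category is nondegenerate if and only if the quasi-abelian $3$-cocycle $(\omega,\gamma,\mu,c)$ is nondegenerate in the sense of Definition~\ref{nondegenerate quasi-abelian 3-cocycle} (i.e.\ the symmetric bicharacter $(x,y) \mapsto c(y,x)c(x,y)$ on $\Ker\partial$ is nondegenerate). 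Combining the two equivalences with Remark~\ref{when C^G is nd} yields the claim.

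The only genuine content to verify — and the place I would be most careful — is the braided-pointed nondegeneracy statement inside Remark~\ref{trivial component nd}, namely that a pointed braided category $\Vec_A^\psi$ with braiding scalar $\sigma(x,y) = c(x,y)$ on an abelian group $A$ is nondegenerate precisely when the associated symmetric bicharacter $x,y \mapsto \sigma(y,x)\sigma(x,y)$ is nondegenerate. This is the standard fact (going back to Joyal–Serre, cf.\ \cite{JS}) that the Müger center of a pointed braided category is the full subcategory on objects whose degree lies in the radical of the hyperbolic bicharacter; an object $X$ of degree $x$ centralizes all of $\C$ iff $\sigma(y,x)\sigma(x,y) = 1$ for all $y \in A$. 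Since the referenced remark asserts this without proof, in the actual write-up I would either cite it or supply the short centralizer computation; everything else in the proposition is bookkeeping of the construction. I anticipate no serious obstacle — the main point is simply to assemble Remark~\ref{when C^G is nd}, the description of the grading fibers, and Remark~\ref{trivial component nd} in the right order.
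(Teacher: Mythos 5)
Your proposal is correct and follows exactly the paper's route: the paper's proof is the one-line combination of Remark~\ref{when C^G is nd} (faithful grading plus nondegenerate trivial component) with Remark~\ref{trivial component nd}, and your two equivalences — faithfulness of the grading being surjectivity of $\partial$, and nondegeneracy of $\Vec_{\Ker\partial}^{\omega|_{\Ker\partial}}$ being nondegeneracy of the restricted abelian $3$-cocycle — are precisely what those remarks supply. Your extra care about the centralizer computation behind Remark~\ref{trivial component nd} is reasonable but not something the paper spells out (and the standard reference is Joyal--Street, not Joyal--Serre).
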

\begin{proof}
This follows immediately by combining Remark~\ref{when C^G is nd} and
Remark~\ref{trivial component nd}.
\end{proof}

Assume that $\partial$ is surjective and 
$(\omega,\gamma,\mu,c)$ is nondegenerate. Then
$\C(\omega,\gamma,\mu,c)^G$ together with the 
canonical twist given in Subsection~\ref{twist and S-matrix}
is a modular category, i.e., the $S$-matrix described in
Subsection~\ref{twist and S-matrix} is invertible.  In this situation,
using the orthogonality relations for projective characters we obtain that
the Gauss sum and central charge, respectively, of $\C(\omega,\gamma,\mu,c)^G$ are given
by
\begin{eqnarray*}
\tau(\C(\omega,\gamma,\mu,c)^G) &=& |G| \sum_{a \in \Ker \partial}c(a,a),\\
\zeta(\C(\omega,\gamma,\mu,c)^G) &=& \frac{1}{\sqrt{|\Ker \partial|}} \sum_{a \in \Ker \partial}c(a,a).
\end{eqnarray*}

\begin{note}
The sum $\sum_{a \in \Ker \partial}c(a,a)$ is the classical Gauss sum for the
quadratic form $a \mapsto c(a,a)$ on the abelian group $\Ker \partial$.
\end{note}

\begin{remark}
Note that the category $\C(\omega,\gamma,\mu,c)^G$ may admit other twists
(besides the canonical one).
In view of Theorem~\ref{braided g-t categories} and Proposition~\ref{when nd}, 
a description of all twists on $\C(\omega,\gamma,\mu,c)^G$ will imply a 
description of {\em all} modular group-theoretical categories. The former is
easily obtained using Proposition~\ref{twists and self-dual invertibles}.
\end{remark}
\end{subsection}
%%%%%%%%%%%%%%%%%%%%%%%%%%%%%%%%%%%%%%%%%%%

\end{section}
%%%%%%%%%%%%%%%%%%%%%%%%%%%%%%%%%%%%%%%%%%%%%%%%%%%%%%%%%%%%%%%%%%%%%%%%%%%
\begin{section}
{Quasi-triangular quasi-hopf algebra arising from quasi-abelian $3$-cocycles
on crossed modules}

Let $(\omega, \gamma, \mu, c)$ be a normalized quasi-abelian $3$-cocycle
on a finite crossed module $(G,X,\partial)$. 
In the previous section we described the braided fusion category
$\C(\omega, \gamma, \mu, c)^G$. This category is integral (i.e., the FP-dimensions of
objects are integers), so
there exists a finite-dimensional quasi-triangular quasi-Hopf algebra $H$ such that
$\C(\omega, \gamma, \mu, c)^G \cong H$-$\Mod$, as braided fusion categories (see \cite[Theorem~8.33]{ENO} and
\cite[Section XV.2]{K}).
Our goal in this section is to describe such an $H$.

In what follows we associate to $(\omega, \gamma, \mu, c)$
a finite-dimensional quasi-triangular quasi-Hopf algebra 
$H(\omega, \gamma, \mu, c)$, which may be viewed as a generalization
of the twisted Drinfeld double of a finite group. Let $H(\omega, \gamma, \mu, c)$
be a finite-dimensional vector space with a basis $\{t_xg\}_{(x,g) \in X \times G}$
indexed by the set $X \times G$.
Define a product on $H(\omega, \gamma, \mu, c)$ by
\begin{equation}
\label{product}
(t_xg)(t_yh) := \delta_{x, \lexp{h}{y}} \gamma_{g,h}(y)^{-1} t_y(gh).
\end{equation}
This product admits a unit
\begin{equation}
\label{unit}
1 = \sum_{x \in X} t_xe.
\end{equation}
Define a coproduct $\Delta: H(\omega, \gamma, \mu, c)
\to H(\omega, \gamma, \mu, c) \ot H(\omega, \gamma, \mu, c)$
and counit $\varepsilon: H(\omega, \gamma, \mu, c) \to \K$ by
\begin{equation}
\label{coproduct}
\Delta(t_xg) := \sum_{a,b \in X: ab=x} \mu_g(a,b) t_ag \ot t_bg
\end{equation}
and
\begin{equation}
\label{counit}
\varepsilon(t_xg) := \delta_{x,e}.
\end{equation}
Also, set
\begin{equation}
\label{Drinfeld associator}
\Phi := \sum_{x,y,z \in X} \omega(x,y,z) t_xe \ot t_ye \ot t_ze,
\end{equation}
\begin{equation}
\label{R-matrix}
R := \sum_{x,y \in X} c(x,y) t_xe \ot t_y \partial(x),
\end{equation}
and
\begin{equation}
\label{alpha beta}
\alpha := 1, \qquad \beta := \sum_{x \in X} \omega(x^{-1},x,x^{-1}) t_xe.
\end{equation}
Finally, define a linear map $S:H(\omega, \gamma, \mu, c) \to
H(\omega, \gamma, \mu, c)$ by
\begin{equation}
\label{antipode}
S(t_xg) := \frac{\gamma_{g^{-1},g}(x^{-1})}{\mu_g(x,x^{-1})} t_{\lexp{g}{x^{-1}}}g^{-1}.
\end{equation}

\begin{proposition}
The product \eqref{product},
unit \eqref{unit}, coproduct $\Delta$ \eqref{coproduct}, counit $\varepsilon$ \eqref{counit}, 
Drinfeld associator $\Phi$ \eqref{Drinfeld associator}, and anti-automorphism $S$
\eqref{antipode} make $H(\omega, \gamma, \mu, c)$ a quasi-triangular quasi-Hopf algebra 
with universal $R$-matrix $R$
\eqref{R-matrix} in the sense of \cite[Definitions 1.1, 2.1, and 5.1]{K}.
\end{proposition}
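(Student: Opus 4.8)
The plan is to verify directly that the structure maps defined in \eqref{product}--\eqref{antipode} satisfy the axioms of a quasi-triangular quasi-Hopf algebra as listed in \cite[Definitions 1.1, 2.1, and 5.1]{K}. The most conceptual route, however, is to first establish that $H(\omega,\gamma,\mu,c)\text{-}\Mod$ is equivalent, as a braided monoidal category, to $\C(\omega,\gamma,\mu,c)^G$ as described in Proposition~\ref{description of equivariantization}, and then invoke the Tannakian reconstruction principle: a collection of structure maps on a finite-dimensional algebra assembles into a quasi-triangular quasi-Hopf algebra precisely when the category of modules, equipped with the induced constraints, is a braided monoidal category with duals and the forgetful functor is monoidal. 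So I would proceed in two stages.

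\textbf{Stage 1: identify the module category.} First I would check that a left $H(\omega,\gamma,\mu,c)$-module is the same thing as an $X$-graded vector space $V=\oplus_{x\in X}V_x$ (the grading recovered from the idempotents $t_xe$, using that $\{t_xe\}_{x\in X}$ is a complete set of orthogonal idempotents summing to $1$ by \eqref{product} and \eqref{unit}) together with operators $g\rt(-)$ coming from the action of the elements $\sum_{x}t_xg$, and that the product \eqref{product} forces exactly the twisted-action compatibility $gh\rt v=\gamma_{g,h}(x)\,(g\rt(h\rt v))$ and $\degree(g\rt v)={}^{g}x$ of \eqref{twisted action condition}. Then the coproduct \eqref{coproduct} gives the tensor product the twisted $G$-action $g\rt(v\ot w)=\mu_g(x,y)\,(g\rt v\ot g\rt w)$ of \eqref{twisted tensor product}, the associator $\Phi$ of \eqref{Drinfeld associator} gives precisely the associativity constraint $(u\ot v)\ot w\mapsto\omega(x,y,z)\,u\ot(v\ot w)$, and the $R$-matrix \eqref{R-matrix} gives the braiding $v\ot w\mapsto c(x,y)\,(\partial(x)\rt w\ot v)$ of \eqref{twisted braiding}. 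Matching these term by term identifies $H(\omega,\gamma,\mu,c)\text{-}\Mod$ with $\C(\omega,\gamma,\mu,c)^G$ on the nose; the element $\beta$ of \eqref{alpha beta} is the one needed to make the evaluation/coevaluation of the graded-vector-space duality compatible with $\Phi$, which is the standard choice in $D^\omega(G)$, and $S$ of \eqref{antipode} is read off from the dual object $V^*$ of degree $x^{-1}$.

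\textbf{Stage 2: verify the axioms.} Whichever route is taken, one still owes the reader a check that each defining identity holds, and this is where the bulk of the work lies: (i) associativity of \eqref{product} and the unit axiom; (ii) coassociativity of $\Delta$ up to conjugation by $\Phi$, i.e. $(\Delta\ot\id)\Delta(h)\,\Phi=\Phi\,(\id\ot\Delta)\Delta(h)$, which reduces to the $3$-cocycle condition \eqref{condition on omega} on $\omega$; (iii) that $\Delta$ is an algebra map, which forces the relation \eqref{relation between mu and omega} between $\mu_g$ and $\omega^g/\omega$ (this is exactly the identity used in Subsection~\ref{construction} to make $\tilde\mu_g$ a tensor structure) together with the $2$-cocycle-type condition \eqref{relation between gamma and mu} on $\gamma$; (iv) the pentagon axiom $3\Phi$ for the associator, again just \eqref{condition on omega}; (v) the counit axioms, which use normalization of $\omega$ and $\mu$; (vi) the antipode axioms $S(h_{(1)})\alpha h_{(2)}=\varepsilon(h)\alpha$ and $h_{(1)}\beta S(h_{(2)})=\varepsilon(h)\beta$, which fix $\beta$ as in \eqref{alpha beta} and use the explicit formula \eqref{antipode}; and (vii) the quasi-triangularity axioms $\Delta^{\mathrm{op}}(h)\,R=R\,\Delta(h)$ and the two hexagon-type identities relating $R$, $\Phi$, $\Phi_{213}$, $\Phi_{231}$ --- and it is precisely these three that encode the $G$-braiding axioms \eqref{T preserves c}--\eqref{c2}, so they reduce to the quasi-abelian cocycle conditions \eqref{relation between c, mu, and gamma}, \eqref{relation between c, omega, and gamma}, and \eqref{relation between c, omega, and mu} on $c$.

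\textbf{Main obstacle.} The genuine difficulty is bookkeeping rather than ideas: the hexagon axioms for $R$ in the quasi-Hopf setting involve $\Phi$ inserted in three distinct positions, and expanding $\Delta(t_xg)$ via \eqref{coproduct} together with the reindexing $\delta_{x,{}^{h}y}$ in the product \eqref{product} produces long sums of scalars that must be matched against \eqref{relation between c, omega, and gamma} and \eqref{relation between c, omega, and mu}. I expect the cleanest presentation is to say once and for all that $H(\omega,\gamma,\mu,c)\text{-}\Mod\cong\C(\omega,\gamma,\mu,c)^G$ as plain monoidal categories with the stated constraints --- which follows from Stage~1 together with the fact that $\C(\omega,\gamma,\mu,c)$ was already verified to be a crossed fusion category in Subsection~\ref{construction} and that equivariantization of a crossed fusion category is braided (as recalled around \eqref{braiding in equivariantization}) --- and then note that the hexagon/pentagon/antipode identities for $(H(\omega,\gamma,\mu,c),\Delta,\varepsilon,\Phi,R,\alpha,\beta,S)$ are equivalent, under this identification, to the associativity, braiding, and duality coherences of that category, hence hold. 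This turns the proof into a short reconstruction argument plus a reference to \cite[Section XV.2]{K}, relegating the scalar calculations to a routine (and optionable) direct check.
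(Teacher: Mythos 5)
Your Stage 2 is exactly the paper's proof: the paper verifies the quasi-Hopf axioms directly, recording for each axiom the quasi-abelian cocycle condition it is equivalent to, and noting that the computations are the same as for the twisted Drinfeld double $D^\omega(G)$. Two entries in your dictionary are misattributed, however. Quasi-coassociativity $(\id\ot\Delta)\Delta(h)=\Phi\,(\Delta\ot\id)\Delta(h)\,\Phi^{-1}$ does \emph{not} reduce to the $3$-cocycle condition \eqref{condition on omega} on $\omega$ (that condition is the pentagon axiom for $\Phi$, your item (iv)); since $(t_ue)(t_ag)=\delta_{u,\lexp{g}{a}}\,t_ag$ while $(t_ag)(t_ue)=\delta_{a,u}\,t_ag$, conjugation by $\Phi$ multiplies $t_ag\ot t_bg\ot t_cg$ by $\omega(\lexp{g}{a},\lexp{g}{b},\lexp{g}{c})/\omega(a,b,c)$, so quasi-coassociativity is precisely $d(\mu_g)=\omega^g/\omega$, i.e.\ condition \eqref{relation between mu and omega}. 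Likewise, $\Delta$ being an algebra map forces only condition \eqref{relation between gamma and mu}, not \eqref{relation between mu and omega} as well; and associativity of the product \eqref{product} is the $2$-cocycle condition \eqref{condition on gamma} on $\gamma$. These slips would presumably be corrected in the course of the computation, but as stated the correspondence between quasi-Hopf axioms and cocycle conditions is wrong in two places.

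Your Stage 1 ``reconstruction'' shortcut needs more care than you give it: in the paper the equivalence $H(\omega,\gamma,\mu,c)$-$\Mod\cong\C(\omega,\gamma,\mu,c)^G$ is a \emph{separate, subsequent} proposition whose very statement presupposes that $H(\omega,\gamma,\mu,c)$ is already a quasi-triangular quasi-Hopf algebra --- otherwise its module category carries no monoidal or braided structure to compare. The argument can be made rigorous (identify module structures with equivariant objects, check that $\Delta$, $\Phi$, $R$ induce the maps \eqref{twisted tensor product}--\eqref{twisted braiding}, then deduce the element-wise identities in $H^{\ot n}$ from coherence in $\C^G$ via faithfulness of the action on the regular module), but verifying that the structure maps correspond already amounts to the same scalar identities, so nothing is saved; the paper simply performs the direct check.
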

\begin{proof}
The proof is completely similar to the one for the twisted Drinfeld double of 
a finite group:
Associativity of the product is equivalent to the equality
$$
\gamma_{h,k}(x) \gamma_{g, hk}(x)
= \gamma_{gh,k}(x)  \gamma_{g,h}(\lexp{k}{x}),
\qquad g,h,k \in G, x \in X,
$$
which holds by Axiom \eqref{condition on gamma} in Definition
\ref{quasi-abelian 3-cocycle}. 
Quasi-coassociativity of the coproduct 
is equivalent to the equality
$$
\frac{\mu_g(y,z) \mu_g(x,yz)}{\mu_g(xy,z) \mu_g(x,y)} = 
\frac{\omega(\lexp{g}{x},\lexp{g}{y},\lexp{g}{z})}{\omega(x,y,z)},
\qquad g \in G, x,y,z \in X,
$$
which holds by Axiom \eqref{relation between mu and omega} in Definition
\ref{quasi-abelian 3-cocycle}. 
That the coproduct is a morphism of algebras
is equivalent to the equality
$$
\frac{\gamma_{g,h}(x) \gamma_{g,h}(y)}{\gamma_{g,h}(xy)} 
= \frac{\mu_g(\lexp{h}{x}, \lexp{h}{y}) \mu_h(x,y)}{\mu_{gh}(x,y)},
\qquad g,h \in G, x,y \in X,
$$
which holds by Axiom \eqref{relation between gamma and mu} in Definition
\ref{quasi-abelian 3-cocycle}.

We note that the inverse of the $R$-matrix $R$ is
$$
R^{-1} = \sum_{x,y \in X} c(x,x^{-1}yx)^{-1} 
\gamma_{\partial(x),\partial(x^{-1})}(y)^{-1} t_xe \ot t_y\partial(x^{-1}).
$$
The $R$-matrix axioms on $R$ hold due to Axioms (e)-(g) in Definition
\ref{quasi-abelian 3-cocycle}.

Finally, Axioms (a)-(d) in Definition
\ref{quasi-abelian 3-cocycle} ensure that $S$ is indeed an 
anti-automorphism that satisfies the required axioms.
\end{proof}

\begin{proposition}
Let $(\omega, \gamma, \mu, c)$ be a normalized quasi-abelian $3$-cocycle
on a finite crossed module $(G,X,\partial)$. The categories 
$\C(\omega, \gamma, \mu, c)^G$ (see Section \ref{equivariantization})
and $H(\omega, \gamma, \mu, c)$-$\Mod$ are equivalent as braided
fusion categories. 
\end{proposition}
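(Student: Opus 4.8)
The plan is to exhibit an explicit equivalence of braided fusion categories by identifying $H(\omega,\gamma,\mu,c)$-$\Mod$ with the concrete description of $\C(\omega,\gamma,\mu,c)^G$ given in Proposition~\ref{description of equivariantization}. First I would observe that an $H(\omega,\gamma,\mu,c)$-module $V$ is the same data as an $X$-graded vector space equipped with a $G$-action: the idempotents $t_xe$ (with $\sum_x t_xe = 1$) decompose $V$ into homogeneous components $V_x := t_xe \cdot V$, and for each $g\in G$ the element $u_g := \sum_{x\in X} t_xg$ acts as a linear automorphism. The product formula \eqref{product} then translates into $u_g(V_x)\subseteq V_{\lexp{g}{x}}$ together with $u_g u_h = \gamma_{g,h}(\,\cdot\,)\, u_{gh}$ on each homogeneous component, which is exactly the twisted-action condition \eqref{twisted action condition}; the unit \eqref{unit} gives $e\rt v = v$. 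This matching of module categories is a routine unwinding of definitions, so I would state it and move on.

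Next I would check that the tensor structure is preserved. The coproduct \eqref{coproduct} sends $t_xg$ to $\sum_{ab=x}\mu_g(a,b)\,t_ag\ot t_bg$, so on a tensor product $V\ot W$ the element $u_g = \sum_x t_xg$ acts on $v\ot w$ with $v$ of degree $a$, $w$ of degree $b$ by the scalar $\mu_g(a,b)$ times $u_g v\ot u_g w$; this is precisely \eqref{twisted tensor product}. The Drinfeld associator $\Phi$ of \eqref{Drinfeld associator} acts on $u\ot v\ot w$ (degrees $x,y,z$) by $\omega(x,y,z)$, reproducing the associativity constraint of $\C(\omega,\gamma,\mu,c)^G$, and the element $\beta$ of \eqref{alpha beta} together with $\alpha=1$ reproduces the unit constraints of $\Vec_X^\omega$. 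Hence the forgetful-type functor $H(\omega,\gamma,\mu,c)$-$\Mod \to \C(\omega,\gamma,\mu,c)^G$ sending $V$ to the $X$-graded space $\bigoplus_x V_x$ with twisted $G$-action $g\rt v := u_g v$ is an equivalence of tensor categories. (That this functor is essentially surjective and fully faithful is immediate once the module-category identification is in place.)

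Finally I would verify compatibility of the braidings. The universal $R$-matrix $R = \sum_{x,y} c(x,y)\, t_xe\ot t_y\partial(x)$ induces, on $v\ot w$ with $v$ of degree $x$ and $w$ of degree $y$, the map $v\ot w \mapsto c(x,y)\,(\,(u_{\partial(x)}\cdot w)\ot v\,)$ after composing with the flip; this is exactly the braiding \eqref{twisted braiding} from Proposition~\ref{description of equivariantization}. So the functor above is in fact an equivalence of braided fusion categories, which is the claim. I would also record that the inverse $R$-matrix and the antipode $S$ of \eqref{antipode} behave as expected, but these are not needed for the braided-equivalence statement beyond confirming $H(\omega,\gamma,\mu,c)$ is genuinely a quasi-triangular quasi-Hopf algebra, which was the content of the previous proposition.

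The main obstacle I anticipate is purely bookkeeping: one must be careful that the scalars coming from $\gamma$, $\mu$, and $\omega$ appear with the correct arguments and exponents when passing between left and right module conventions and between the coproduct and the tensor product — in particular the appearance of $u_g := \sum_x t_x g$ (as opposed to individual $t_xg$) as the operator implementing $g\rt(-)$, and the fact that $t_xg$ detects the degree \emph{after} the action (because of the $\delta_{x,\lexp{h}{y}}$ in \eqref{product}), must be tracked consistently. Once the dictionary $V \leftrightarrow \bigoplus_x V_x$, $g\rt v := u_g v$ is fixed, each of the braided-tensor-category structures matches on the nose, and there is no conceptual difficulty; the equivalence is essentially forced by the way $H(\omega,\gamma,\mu,c)$ was defined.
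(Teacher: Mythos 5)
Your proposal is correct and follows essentially the same route as the paper's proof: decompose an $H(\omega,\gamma,\mu,c)$-module into its $X$-graded pieces via the idempotents $t_xe$, read off the twisted $G$-action from the elements $t_xg$, and match the coproduct, associator $\Phi$, and $R$-matrix against the explicit description of $\C(\omega,\gamma,\mu,c)^G$ in Proposition~\ref{description of equivariantization}. The only difference is that you spell out a few of the verifications (e.g.\ the operator $u_g=\sum_x t_xg$ and the unit constraints) that the paper leaves implicit.
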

\begin{proof}
Let $V$ be a (left) module over $H(\omega, \gamma, \mu, c)$, with
action denoted by ``$\cdot$". Note that
$V$ admits an $X$-grading: $V = \oplus_{x \in X} V_x$, where
$V_x = (t_xe) \cdot V$. Define a twisted action of $G$ on $V$ by
$$
g \rt v := (t_xg) \cdot v,
$$
for all $v \in V$ homogeneous of degree $x \in X$. Observe that the
degree of $g \rt v$ is $\lexp{g}{x}$, as 
$(t_xg)(t_xe) = (t_{\lexp{g}{x}}e)(t_xg)$. The aforementioned action
is twisted in the sense that 
$$
gh \rt v = \gamma_{g,h}(x) (g \rt (h \rt v)).
$$
Note that the twisted action of $G$ completely determines the action
of $H(\omega, \gamma, \mu, c)$ on the module $V$.
The associativity constraint on the category $H(\omega, \gamma, \mu, c)$-$\Mod$
(which is defined using the Drinfeld associator $\Phi$ \eqref{Drinfeld associator}) 
is given by
$$
(u \ot v) \ot w \mapsto \omega(x,y,z) u \ot (v \ot w),
$$
for all homogeneous $u,v,w$ of degrees $x,y,z \in X$.
The braiding on the category $H(\omega, \gamma, \mu, c)$-$\Mod$ (which is defined
using the $R$-matrix $R$ \eqref{R-matrix}) is given by
$$
v \ot w \mapsto c(x,y) (\partial(x) \rt w \ot v),
$$
for homogeneous $v,w$ of degrees $x,y \in X$.
Comparing with Proposition~\ref{description of equivariantization} 
we obtain the stated assertion.
\end{proof}

We next explain the relation between the quasi-triangular quasi-Hopf
algebras constructed above and the twisted Drinfeld double of a 
finite group. Let $\omega$ be a normalized $3$-cocycle on a finite group $G$.

Define
\begin{equation}
\label{gamma omega}
\gamma_{g,h}(x) := \frac{\omega(g,h,x)\omega(ghxh^{-1}g^{-1},g,h)}
{\omega(g, hxh^{-1},h)}
\end{equation}
and
\begin{equation}
\label{mu omega}
\mu_g(x,y) := \frac{\omega(gxg^{-1},g,y)}
{\omega(gxg^{-1},gyg^{-1},g) \omega(g,x,y)},
\end{equation}
for all $g,h,x,y \in G$.

A direct computation establishes the following.

\begin{lemma}
\label{3-cocycle to quasi-abelian 3-cocycle}
The quadruple $(\omega, \gamma, \mu, 1)$ where
$\gamma$ and $\mu$ are defined by \eqref{gamma omega}
and \eqref{mu omega}, respectively, is a quasi-abelian
$3$-cocycle on the crossed module $(G,G, \id_G)$
(where $G$ acts on itself by conjugation) 
in the sense of Definition \ref{quasi-abelian 3-cocycle}.
\end{lemma}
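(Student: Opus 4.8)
The plan is to verify directly that the quadruple $(\omega, \gamma, \mu, 1)$ satisfies conditions \eqref{condition on omega}--\eqref{relation between c, omega, and mu} of Definition~\ref{quasi-abelian 3-cocycle}, specialized to the crossed module $(G,G,\id_G)$ with $G$ acting on itself by conjugation, so that $\lexp{g}{x} = gxg^{-1}$ and $\partial = \id_G$. Condition \eqref{condition on omega} is immediate since $\omega$ is assumed to be a $3$-cocycle on $G$. For conditions \eqref{condition on gamma} and \eqref{relation between mu and omega}, I would recognize $\gamma$ and $\mu$ as the familiar quantities appearing in the construction of the twisted Drinfeld double $D^\omega(G)$ of Dijkgraaf--Pasquier--Roche \cite{DPR1, DPR2}: $\gamma$ is the $2$-cocycle in $Z^2(G, C^1(G,\K^\times))$ governing the twisted conjugation action (the associativity constraint for the $D^\omega(G)$-module structure), and $\mu_g$ measures the failure of $\omega$ to be conjugation-invariant, i.e. $d(\mu_g) = \omega^g/\omega$, which is precisely \eqref{relation between mu and omega}. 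These identities are standard consequences of the $3$-cocycle condition on $\omega$ and can be cited or checked by a short cocycle manipulation.

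Next, condition \eqref{relation between gamma and mu}, namely $d(\gamma_{g,h}) = (d\mu)_{g,h}$, expressing that $\gamma$ is a coboundary-valued cochain whose differential matches the "failure of $\mu$ to be a $1$-cocycle", is again a relation known to hold in the $D^\omega(G)$ setting — it is exactly the compatibility ensuring that the comultiplication on $D^\omega(G)$ is an algebra map. I would verify it by expanding both sides using formulas \eqref{gamma omega} and \eqref{mu omega} and repeatedly applying the pentagon identity for $\omega$; this is a finite, if tedious, computation. Since $c = 1$ here, conditions \eqref{relation between c, mu, and gamma}, \eqref{relation between c, omega, and gamma}, and \eqref{relation between c, omega, and mu} reduce to identities involving only $\omega$, $\gamma$, and $\mu$. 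Concretely, \eqref{relation between c, mu, and gamma} with $c=1$ becomes $\mu_g(xyx^{-1},x)\gamma_{gxg^{-1},g}(y) = \mu_g(x,y)\gamma_{g,x}(y)$ (using $\partial(x)=x$), \eqref{relation between c, omega, and gamma} becomes $\omega(x,y,z)\omega((xy)z(xy)^{-1},x,y) = \omega(x,yzy^{-1},y)\gamma_{x,y}(z)$, and \eqref{relation between c, omega, and mu} becomes $\omega(x,y,z)\omega(xyx^{-1},xzx^{-1},x)\mu_x(y,z) = \omega(xyx^{-1},x,z)$. Each of these is, upon substituting the defining formulas for $\gamma$ and $\mu$, an identity that follows from the $3$-cocycle relation on $\omega$ — indeed, \eqref{relation between c, omega, and gamma} is essentially the definition \eqref{gamma omega} of $\gamma$ rearranged, and \eqref{relation between c, omega, and mu} is essentially the definition \eqref{mu omega} of $\mu$ rearranged.

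The main obstacle is purely computational: condition \eqref{relation between gamma and mu} requires the most delicate bookkeeping, since it couples $\gamma$ (a product/quotient of three $\omega$-values) with $\mu$ (another product/quotient of three $\omega$-values) evaluated at conjugated arguments, and unwinding it demands several applications of the pentagon identity with carefully chosen substitutions. A clean way to organize the whole verification — and the route I would actually take — is conceptual rather than brute-force: observe that $(\omega,\gamma,\mu,1)$ is exactly the data extracted from the quasi-triangular quasi-Hopf algebra $D^\omega(G)$ under the correspondence of Section~6 (the product of $D^\omega(G)$ encodes $\gamma$ via \eqref{product}, the coproduct encodes $\mu$ via \eqref{coproduct}, the associator encodes $\omega$, and the $R$-matrix encodes $c=1$ via \eqref{R-matrix}). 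Since $D^\omega(G)$ is a genuine quasi-triangular quasi-Hopf algebra, its structure axioms translate termwise into precisely conditions \eqref{condition on omega}--\eqref{relation between c, omega, and mu}; one then only needs to match the formulas \eqref{gamma omega}, \eqref{mu omega} against the known comultiplication and associativity data of $D^\omega(G)$ in \cite{DPR1, DPR2}. Either way, no new ideas are needed beyond careful use of the pentagon identity, which is why the lemma is stated as following from "a direct computation."
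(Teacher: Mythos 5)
Your proposal is correct and takes essentially the same route as the paper, whose entire proof is the phrase ``a direct computation'': you carry out exactly that computation, with the right specializations $\partial=\id_G$, $\lexp{g}{x}=gxg^{-1}$, $c=1$, and your key observations are accurate --- conditions (f) and (g) of Definition~\ref{quasi-abelian 3-cocycle} reduce precisely to the defining formulas \eqref{gamma omega} and \eqref{mu omega}, and in fact condition (e) with $c=1$ holds by pure cancellation of $\omega$-factors, so only (b), (c), (d) genuinely invoke the $3$-cocycle identity. Your alternative organization via the known quasi-Hopf axioms of $D^{\omega}(G)$ is a legitimate repackaging of the same verification (the paper's Section~6 translation is stated as a term-by-term equivalence), so no gap remains either way.
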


Let $(\omega, \gamma, \mu, 1)$ be the quasi-abelian
$3$-cocycle on $(G,G,\id_G)$ constructed in Lemma \ref{3-cocycle to quasi-abelian 3-cocycle}. 
Then, evidently, $H(\omega^{-1}, \gamma^{-1}, \mu^{-1}, 1) \cong D^{\omega}(G)$,
as quasi-triangular quasi-Hopf algebras.
In particular, $\C(\omega^{-1}, \gamma^{-1}, \mu^{-1}, 1)^G \cong D^\omega(G)$-$\Mod$, as 
braided fusion categories.

\end{section}
%%%%%%%%%%%%%%%%%%%%%%%%%%%%%%%%%%%%%%%%%%%%%%%%%%%%%%%%%%%%%%%%%%%%%%%%%%%

%%%%%%%%%%%%%%%%%%%%%%%%%%%%%%%%%%%%%%%%%%%%%%%%%%%%%%%%%%%%%%%%%%%%%%%%%%%
\bibliographystyle{ams-alpha}

%%%%%%%%%%%%%%%%%%%%%%%%%%%%%%%%%%%%%%%%%%%%%%%%%%%%%%%%%%%%%%%%%%%%%%%%%%%

\end{document}